\chardef\bslash=`\\ 
\newtheorem{thm}{Theorem}[section]
\newtheorem{cor}[thm]{Corollary}
\newtheorem{lem}[thm]{Lemma}
\newtheorem{prop}[thm]{Proposition}
\theoremstyle{definition}
\newtheorem{rem}[thm]{Remark}
\theoremstyle{remark}
\newcommand{\eval}[2][\right]{\relax
  \ifx#1\right\relax \left.\fi#2#1\rvert}
\begin{document}
\title{Flat simplices and kissing polytopes}

\author[A. Deza]{Antoine Deza}
\address{McMaster University, Hamilton, Ontario, Canada}
\email{deza@mcmaster.ca} 

\author[L. Pournin]{Lionel Pournin}
\address{Universit{\'e} Paris 13, Villetaneuse, France}
\email{lionel.pournin@univ-paris13.fr}

\maketitle

\begin{abstract}
We consider how flat a lattice simplex contained in the hypercube $[0,k]^d$ can be. This question is related to the notion of kissing polytopes: two lattice polytopes contained in the hypercube $[0,k]^d$ are kissing when they are disjoint but their distance is as small as possible. We show that the smallest possible distance of a lattice point $P$ contained in the cube $[0,k]^3$ to a lattice triangle in the same cube that does not contain $P$ is
$$
\frac{1}{\sqrt{3k^4-4k^3+4k^2-2k+1}}
$$
when $k$ is at least $2$. We also improve the known lower bounds on the distance of kissing polytopes for $d$ at least $4$ and $k$ at least $2$.
\end{abstract}


\section{Introduction}\label{DLP3.sec.1}

Consider a full dimensional simplex $S$ contained in $\mathbb{R}^d$. We say that two proper faces $P$ and $Q$ of $S$ are \emph{opposite} when the vertex set of $P$ and the vertex set of $Q$ form a partition of the vertex set of $S$. We are interested in measuring how close $P$ and $Q$ can be, or equivalently how flat $S$ can be, under the constraint that $S$ is a lattice $(d,k)$\nobreakdash-simplex---here and in the sequel, a lattice $(d,k)$-polytope refers to a polytope contained in $[0,k]^d$ and whose vertices are lattice points. More precisely, we will study the smallest possible distance $\varepsilon_i(d,k)$ between an $i$-dimensional face of a lattice $(d,k)$-simplex and the opposite $(d-i-1)$\nobreakdash-dimensional face. 
Similar measures of how close two polytopes can be appear in optimization \cite{BeckShtern2017,GutmanPena2018,Lacoste-JulienJaggi2015,Pena2019,PenaRodriguez2019,RademacherShu2022} and combinatorics \cite{AlonVu1997,GrahamSloane1984}. This quantity is also related to \emph{kissing polytopes}: let us denote by $\varepsilon(d,k)$ the smallest possible distance between two disjoint lattice $(d,k)$\nobreakdash-polytopes and say that two lattice $(d,k)$\nobreakdash-polytopes whose distance is $\varepsilon(d,k)$ are \emph{kissing} following the notation and terminology from \cite{DezaLiuPournin2025,DezaLiuPournin2024,DezaOnnPokuttaPournin2024}. In fact,
\begin{equation}\label{DLP3.sec.1.eq.1}
\varepsilon(d,k)=\mathrm{min}\biggl\{\varepsilon_i(d,k):0\leq{i}\leq\frac{d-1}{2}\biggr\}\mbox{.}
\end{equation}

General upper and lower bounds on $\varepsilon(d,k)$ have been proven in \cite{DezaOnnPokuttaPournin2024} and expressions for $\varepsilon(2,k)$ and $\varepsilon(3,k)$ have been established in \cite{DezaLiuPournin2025,DezaLiuPournin2024}. Moreover a computer assisted strategy allowed to compute additional values of $\varepsilon(d,k)$ when $d$ and $k$ is reasonably small \cite{DezaLiuPournin2024}. The values of $\varepsilon(d,k)$ known so far are reported in Table~\ref{DLP3.sec.1.tab.2}. It is shown in \cite{DezaLiuPournin2024} that $\varepsilon(2,k)$ is always the distance between a point and a segment. In other words, it is always equal to $\varepsilon_0(2,k)$. Note that, when $d$ is equal to $2$, there is no other possible value of $\varepsilon_i(2,k)$ because $\varepsilon_0(2,k)$ and $\varepsilon_1(2,k)$ coincide (and in general $\varepsilon_i(d,k)$ and $\varepsilon_{d-i-1}(d,k)$ are equal). When $d$ is at least $3$ however, it is shown in \cite{DezaLiuPournin2025,DezaLiuPournin2024} that all the known values of $\varepsilon(d,k)$ are equal to $\varepsilon_1(d,k)$. In these cases, $\varepsilon_0(d,k)$ is unknown, even in the cases where $\varepsilon(d,k)$ itself if known (with the exception of $\varepsilon_0(d,1)$ which is always equal to $1/\sqrt{d}$, see \cite[Lemma 2.4]{DezaOnnPokuttaPournin2024}). The first purpose of this article is to provide the value of $\varepsilon_0(d,k)$ corresponding to every known value of $\varepsilon(d,k)$ when $d$ is at least $3$ and $k$ at least $2$. In particular, we prove the following.
\begin{table}
\begin{center}
\begin{tabular}{>{\centering}p{0.8cm}cccc}
\multirow{2}{*}{$d$}& \multicolumn{4}{c}{$k$}\\
\cline{2-5}
 & $1$ & $2$  & $3$  & $k\geq4$\\
\hline & \\[-1.1\bigskipamount]
$2$ & $\sqrt{2}$ & $\sqrt{5}$ & $\sqrt{13}$  & $\sqrt{(k-1)^2+k^2}$\\
$3$ & $\sqrt{6}$ & $5\sqrt{2}$ & $\sqrt{299}$ & $\sqrt{2(2k^2-4k+5)(2k^2-2k+1)}$\\
$4$ & $3\sqrt{2}$ & $2\sqrt{113}$ & $11\sqrt{71}$\\
$5$ & $\sqrt{58}$\\
$6$ & $\sqrt{202}$\\
\end{tabular}
\end{center}
\caption{The known values of $1/\varepsilon(d,k)$.}\label{DLP3.sec.1.tab.2}
\end{table}

\begin{thm}\label{DLP3.sec.1.thm.3}
If $k$ is at least $2$, then
$$
\varepsilon_0(3,k)=\frac{1}{\sqrt{3k^4-4k^3+4k^2-2k+1}}
$$
and, up to symmetry, this distance is uniquely achieved between the point $(1,1,1)$ and the triangle with vertices $(0,0,1)$, $(k,k-1,0)$, and $(0,k,k)$
\end{thm}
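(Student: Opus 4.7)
The plan is to establish the upper bound by direct computation on the claimed extremal configuration, and then prove the matching lower bound by analyzing the orthogonal projection of the lattice point $P$ onto the plane of the triangle subject to cube containment.

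For the upper bound I would take $A=(0,0,1)$, $B=(k,k-1,0)$, $C=(0,k,k)$ and $P=(1,1,1)$ and compute directly that $n=(B-A)\times(C-A)=(k^{2}-k+1,\,-k(k-1),\,k^{2})$, so that $|n|^{2}=3k^{4}-4k^{3}+4k^{2}-2k+1$ and $n\cdot(P-A)=1$. The distance from $P$ to $\mathrm{aff}(T)$ is then exactly $1/\sqrt{3k^{4}-4k^{3}+4k^{2}-2k+1}$, and a short check of barycentric coordinates confirms that the foot of this perpendicular lies inside $T=\mathrm{conv}(A,B,C)$, so this is also $\mathrm{dist}(P,T)$.

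For the lower bound, let $A,B,C,P$ be lattice points of $[0,k]^{3}$ spanning a non-degenerate tetrahedron, set $n=(B-A)\times(C-A)$, and write $\delta=n\cdot(P-A)$, a nonzero integer. If the orthogonal projection of $P$ onto $\mathrm{aff}(T)$ lies in $T$, then
$$\mathrm{dist}(P,T)=\frac{|\delta|}{|n|}\geq\frac{1}{|n|},$$
and otherwise $\mathrm{dist}(P,T)$ equals the distance from $P$ to one of the three edges of $T$. The argument splits along these two cases, and the main work is to prove the sharp inequality $|n|^{2}\leq 3k^{4}-4k^{3}+4k^{2}-2k+1$ in the foot-in-triangle case, with equality only in the claimed configuration up to the symmetries of the cube $[0,k]^{3}$.

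I would approach this sharp bound by writing the three coordinates of $n$ as the $2\times 2$ minors of the integer matrix with rows $B-A$ and $C-A$ (each entry in $[-k,k]$) and by decomposing $P-A$ in the basis $\{B-A,C-A,n\}$: the foot-in-triangle hypothesis becomes a set of linear inequalities on the barycentric coordinates of the foot, and $|\delta|\geq 1$ pins down the coefficient on $n$. Combined with the cube constraints on $A,B,C,P$, these inequalities restrict the admissible triples of minors enough to permit a finite case analysis on the primitive direction of $n$, which identifies the extremal configuration and yields uniqueness. In the remaining case, where the foot lies outside $T$, the distance reduces to that from a lattice point to one of the three lattice segments bounding $T$, and can be bounded below by a two-dimensional argument in the plane spanned by $P$ and that segment, analogous to the one used for $\varepsilon_{0}(2,k)$ in \cite{DezaLiuPournin2024}, giving a bound that exceeds the target whenever $k\geq 2$. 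I expect the principal obstacle to be the sharp bound on $|n|$ in the main case: the crude inequality $|n|^{2}\leq 12k^{4}$ from cube containment alone is a factor of four too weak, so the foot-in-triangle hypothesis must be used in a genuinely non-trivial way to close the gap.
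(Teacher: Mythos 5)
Your upper-bound computation is correct and matches the paper's: the normal $n=(k^2-k+1,\,-k(k-1),\,k^2)$ has $\|n\|^2=3k^4-4k^3+4k^2-2k+1$ and $n\cdot(P-A)=1$. Your treatment of the foot-outside-triangle case (reduce to a lattice point versus a lattice edge, where the distance is at least $1/(k\sqrt{3})$, far above the target) is a legitimate and in fact more elementary route than the paper's, which instead invokes a monotonicity-in-$d$ argument (its Corollary~\ref{DLP3.sec.1.5.cor.1}) to rule this case out.

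The gap is in the main case. The inequality you declare to be the heart of the proof, namely $\|n\|^2\leq 3k^4-4k^3+4k^2-2k+1$ for every foot-in-triangle configuration, is false. Take $A=(0,0,k)$, $B=(k,0,0)$, $C=(0,k,0)$: then $n=(k^2,k^2,k^2)$ and $\|n\|^2=3k^4$, which exceeds your bound for all $k\geq 2$, and the foot of the perpendicular from a suitable interior lattice point does land in the triangle. What saves such configurations is not the position of the foot but divisibility: here $\gcd(n_1,n_2,n_3)=k^2$, so $|\delta|=|n\cdot(P-A)|\geq k^2$ and the distance is large. The statement you actually need is that $\|n\|^2>3k^4-4k^3+4k^2-2k+1$ forces $|\delta|\geq 2$ for every lattice $P$ off the plane --- equivalently, among configurations with $|\delta|=1$ the maximum of $\|n\|^2$ is exactly the target, attained only at $Q^\star$ up to symmetry. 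This is where all the work lies, and your proposal does not engage with it: the "foot-in-triangle hypothesis" cannot close the gap, since the counterexample above satisfies it. (Also, cube containment of the vertices already gives $\|n\|^2\leq 3k^4$, not $12k^4$, so the crude bound is off only by lower-order terms; the missing ingredient is arithmetic, not geometric.)

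Relatedly, your "finite case analysis on the primitive direction of $n$" is not finite uniformly in $k$: the number of primitive normal directions realizable in $[0,k]^3$ grows with $k$. The paper's Section~\ref{DLP3.sec.3.5} spends three lemmas showing that any minimizing configuration has each edge-vector coordinate within distance $6$ of $0$ or $k$, which after an affine change of variables yields a finite candidate set $\mathcal{A}$ \emph{independent of} $k$; the extremality and uniqueness of $Q^\star$ then come from a symbolic computation over $\mathcal{A}$ (comparing gcds of three quadratics in $k$), supplemented by exhaustive enumeration for $2\leq k\leq 8$. Finally, even granting uniqueness of $Q^\star$, the uniqueness of the point $P^\star$ requires its own argument (the paper's last paragraph rules out a second lattice point at the same distance via a translate-of-$Q^\star$ argument), which your proposal omits.
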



When $d$ is at least $4$, the values of $\varepsilon_i(d,k)$ corresponding to the known values of $\varepsilon(d,k)$ are obtained using the computational strategy from \cite{DezaLiuPournin2024}. Tables \ref{DLP3.sec.1.tab.1} and~\ref{DLP3.sec.1.tab.1.2} show all the values of $\varepsilon_0(d,k)$ and $\varepsilon_i(d,1)$ known so far where the bolded entries correspond to the values that we report for the first time.
\begin{figure}
\begin{centering}
\includegraphics[scale=1]{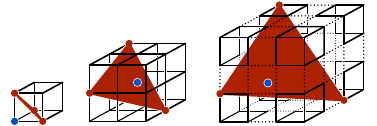}
\caption{A lattice point and a lattice triangle that achieve $\varepsilon_0(3,k)$ for $k$ equal to $1$, $2$, and at least $3$ (from left to right).}\label{DLP3.sec.1.fig.1}
\end{centering}
\end{figure}
\begin{table}[b]
\begin{center}
\begin{tabular}{>{\centering}p{0.8cm}cccc}
\multirow{2}{*}{$d$}& \multicolumn{4}{c}{$k$}\\
\cline{2-5}
 & $1$ & $2$  & $3$  & $k\geq4$\\
\hline & \\[-1.1\bigskipamount]
$2$ & $\sqrt{2}$ & $\sqrt{5}$ & $\sqrt{13}$ & $\sqrt{(k-1)^2+k^2}$\\
$3$ & $\sqrt{3}$ & $\bm{\sqrt{29}}$ & $\bm{\sqrt{166}}$ & $\bm{\sqrt{3k^4-4k^3+4k^2-2k+1}}$\\
$4$ & $2$ & $\bm{\sqrt{209}}$ & $\bm{\sqrt{3022}}$\\
$\vdots$ & $\vdots$\\
$d$ & $\sqrt{d}$\\
\end{tabular}
\end{center}
\caption{The known values of $1/\varepsilon_0(d,k)$.}\label{DLP3.sec.1.tab.1}
\end{table}

We will adopt a different point of view on how flat a lattice $(d,k)$\nobreakdash-simplex can be by considering the smallest possible distance $\varepsilon_i^u(d,k)$ between the affine hull of an $i$\nobreakdash-dimensional face of a $d$-dimensional lattice $(d,k)$\nobreakdash-simplex and the affine hull of the opposite $(d-i-1)$\nobreakdash-dimensional face, where the exponent $u$ stands for \emph{unbounded}. Ronald Graham and Neil Sloane~\cite{GrahamSloane1984} and Noga Alon and V\u{a}n V{\~u} \cite{AlonVu1997} have considered $\varepsilon_0^u(d,1)$ and shown that
$$
\frac{2^{d-1}}{\sqrt{d}^{d+3}}\leq\varepsilon_0^u(d,1)\leq\frac{4^{d+o(d)}}{\sqrt{d}^d}\mbox{.}
$$
\begin{table}
\begin{center}
\begin{tabular}{>{\centering}p{0.8cm}ccc}
\multirow{2}{*}{$d$}& \multicolumn{3}{c}{$i$}\\
\cline{2-4}
 & $0$ & $1$  & $2$\\
\hline & \\[-1.1\bigskipamount]
$2$ & $\sqrt{2}$ & &\\
$3$ & $\sqrt{3}$ & $\sqrt{6}$ &\\
$4$ & $2$ & $3\sqrt{2}$ &\\
$5$ & $\sqrt{5}$ & $\sqrt{58}$ & $\bm{\sqrt{55}}$\\
$6$ & $\sqrt{6}$ & $\sqrt{202}$ & $\bm{\sqrt{199}}$\\
\end{tabular}
\end{center}
\caption{The known values of $1/\varepsilon_i(d,1)$.}\label{DLP3.sec.1.tab.1.2}
\end{table}

In analogy with $\varepsilon_i(d,k)$, we will denote
\begin{equation}\label{DLP3.sec.1.eq.2}
\varepsilon^u(d,k)=\mathrm{min}\biggl\{\varepsilon_i^u(d,k):0\leq{i}\leq\frac{d-1}{2}\biggr\}\mbox{.}
\end{equation}

Let us first remark that all the known values of $\varepsilon(d,k)$ coincide with $\varepsilon^u(d,k)$: the strategy for the computation of all the values of $\varepsilon(d,k)$ reported in Table~\ref{DLP3.sec.1.tab.2} was to compute $\varepsilon^u(d,k)$ in order to simplify the argument by only handling affine hulls (instead of convex hulls) and then to observe that an explicit pair $P$ and $Q$ of disjoint lattice $(d,k)$-polytopes satisfies
$$
d(P,Q)=\varepsilon^u(d,k)\mbox{.}
$$
\begin{table}[b]
\begin{center}
\begin{tabular}{>{\centering}p{0.8cm}ccc}
\multirow{2}{*}{$d$}& \multicolumn{3}{c}{$k$}\\
\cline{2-4}
 & $1$ & $2$  & $3$\\
\hline & \\[-1.1\bigskipamount]
$4$ & $\bm{\sqrt{7}}$ & $\bm{\sqrt{209}}$ & $\bm{\sqrt{3142}}$\\
$5$ & $\bm{\sqrt{19}}$ & &\\
$6$ & $\bm{\sqrt{59}}$ & &\\
\end{tabular}
\end{center}
\caption{The known values of $1/\varepsilon_0^u(d,k)$.}\label{DLP3.sec.1.tab.3}
\end{table}

More precisely, it is shown, in \cite{DezaLiuPournin2024} that for all positive $k$,
$$
\varepsilon(2,k)=\varepsilon_0(2,k)=\varepsilon_0^u(2,k)
$$
and in \cite{DezaLiuPournin2025} that, for all positive $k$,
$$
\varepsilon(3,k)=\varepsilon_1(3,k)=\varepsilon_1^u(3,k)\mbox{.}
$$

We shall see here when computing $\varepsilon_0(3,k)$ that for all positive $k$,
$$
\varepsilon_0(3,k)=\varepsilon_0^u(3,k)\mbox{.}
$$

When $d$ is at least $4$, computing $\varepsilon_i^u(d,k)$ using the computer-assisted strategy from \cite{DezaLiuPournin2024} shows that $\varepsilon_i(d,k)$ and $\varepsilon_i^u(d,k)$ are not always equal. More precisely, all the known values of $\varepsilon_i^u(d,k)$ are equal to $\varepsilon_i(d,k)$ except for $\varepsilon_0^u(d,1)$ when $4\leq{d}\leq6$ and $\varepsilon_0^u(4,3)$. The known values of $\varepsilon_0^u(d,k)$ are reported in Table~\ref{DLP3.sec.1.tab.3} when $d$ is at least $4$ (the entries in Table \ref{DLP3.sec.1.tab.3} are bolded in order to indicate that all of these values are reported for the first time). Remark that $\varepsilon_0(d,1)$ is greater than $\varepsilon_0^u(d,1)$ when $d$ is at least $4$. We provide a lower bound on $\varepsilon^u(d,k)$ for arbitrary $d$ and $k$, which improves the lower bound on $\varepsilon(d,k)$ stated by Theorems 1.1 and 2.3 from \cite{DezaOnnPokuttaPournin2024}. We also recover and marginally improve the lower bound on $\varepsilon_0^u(d,1)$ by Ronald Graham and Neil Sloane~\cite{GrahamSloane1984}.

\begin{thm}\label{DLP3.sec.1.thm.1}
For every positive integer $d$,
$$
\varepsilon_0^u(d,1)\geq\frac{2^{d-1}}{\sqrt{d}^{d+1}}
$$
and for any positive integers $d$ and $k$,
$$
\varepsilon^u(d,k)\geq\frac{1}{k^{d-1}\sqrt{d}^d}\mbox{.}
$$
\end{thm}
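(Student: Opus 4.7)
The plan is to fix a full-dimensional $(d,k)$-simplex $S$ with vertices $v_0,\ldots,v_d$, partition its vertex set into opposite faces $P$ of dimension $i$ and $Q$ of dimension $d-i-1$, and lower-bound the distance $h$ between $\mathrm{aff}(P)$ and $\mathrm{aff}(Q)$. The central identity I would establish is a volume decomposition: after translating so that $v_0$ sits at the origin and choosing an orthonormal basis whose first $i$ vectors span the direction space $L$ of $\mathrm{aff}(P)$, the matrix $N$ with columns $v_l-v_0$ becomes block-triangular, which yields
\[
|\det N|=i!\,(d-i-1)!\cdot h\cdot\mathrm{vol}_i(P)\cdot\mathrm{vol}_{d-i-1}(Q^\perp),
\]
where $Q^\perp$ is the orthogonal projection of $Q$ onto $L^\perp$. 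Since $S$ is full-dimensional with integer vertices, $|\det N|\geq 1$; since projection does not increase volume, $\mathrm{vol}_{d-i-1}(Q^\perp)\leq\mathrm{vol}_{d-i-1}(Q)$.

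For the general lower bound on $\varepsilon^u(d,k)$, I would control $\mathrm{vol}_i(P)$ and $\mathrm{vol}_{d-i-1}(Q)$ via Hadamard's inequality applied to the edge vectors: each $v_l-v_0$ has length at most $k\sqrt{d}$, so $j!\,\mathrm{vol}_j$ of any $j$-dimensional face of $S$ is at most $(k\sqrt{d})^j$. Substituting into the decomposition yields $h\geq 1/(k\sqrt{d})^{d-1}$, which implies the claimed bound for every admissible $i$.

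For the first bound, the setting is $k=1$ and $i=0$, so $P=\{v_0\}$ and $h=1/\|n\|$, where $n\in\mathbb{Z}^d$ is a primitive normal to the hyperplane $\mathrm{aff}(Q)$. The key observation is that each component $n_j$ equals, up to sign, $(d-1)!\,\mathrm{vol}_{d-1}(T_j)$, where $T_j$ is the $(d-1)$-simplex in $\{0,1\}^{d-1}$ obtained by deleting the $j$-th coordinate from every vertex of $Q$. The main obstacle is to bound $(d-1)!\,\mathrm{vol}_{d-1}(T_j)\leq d^{d/2}/2^{d-1}$. I would handle this via the affine map $v\mapsto 2v-\mathbf{1}$, which sends $\{0,1\}^{d-1}$ bijectively to $\{-1,1\}^{d-1}$ and scales $(d-1)$-dimensional volumes by $2^{d-1}$; appending a coordinate equal to $1$ embeds the rescaled simplex into the hyperplane $\{x_d=1\}\subset\mathbb{R}^d$, and forming the $d\times d$ matrix whose columns are the embedded vertices produces a $\{-1,1\}$-matrix whose determinant has absolute value $(d-1)!\,\mathrm{vol}_{d-1}(T_j^{\pm1})$, by the cone-volume identity applied to the apex at the origin. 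Hadamard's classical inequality bounds this determinant by $d^{d/2}$, whence $(d-1)!\,\mathrm{vol}_{d-1}(T_j)\leq d^{d/2}/2^{d-1}$. Summing $\|n\|^2=\sum_{j=1}^d n_j^2$ then gives $\|n\|\leq d^{(d+1)/2}/2^{d-1}=\sqrt{d}^{d+1}/2^{d-1}$, and hence $h\geq 2^{d-1}/\sqrt{d}^{d+1}$, as desired.
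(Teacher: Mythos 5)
Your argument is correct, but it reaches the two bounds by a genuinely different decomposition than the paper. The paper works with the combined $d\mathord{\times}(d-1)$ edge matrix $A$ of \emph{both} faces and the orthogonal projector $A(A^tA)^{-1}A^t$: integrality of the cofactor expression gives $\varepsilon_i^u(d,k)\geq 1/\sqrt{\det(A^tA)}$, Cauchy--Binet turns $\det(A^tA)$ into $\sum_j\det(A_j)^2$, and Hadamard is applied to the $(d-1)\mathord{\times}(d-1)$ minors $A_j$, which mix edge vectors of $P$ and $Q$. You instead factor the determinant of the full simplex as $|\det N|=i!\,(d-i-1)!\,h\,\mathrm{vol}_i(P)\,\mathrm{vol}_{d-i-1}(Q^\perp)$, use $|\det N|\geq 1$, and apply Hadamard separately to the edges of $P$ and of $Q$; this geometric route is self-contained and in fact yields the marginally sharper $h\geq 1/(k\sqrt{d})^{d-1}$ (versus the paper's $1/(\sqrt{d}\,k^{d-1}\sqrt{d-1}^{\,d-1})$), both of which imply the stated $1/(k^{d-1}\sqrt{d}^{d})$. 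For $\varepsilon_0^u(d,1)$ the two arguments converge on the same quantity, the norm of the cross-product normal with components $\pm\det(A_j)$, but you additionally \emph{prove} the $0/1$ Hadamard bound $|\det A_j|\leq\sqrt{d}^{d}/2^{d-1}$ via the $\{0,1\}\to\{-1,1\}$ rescaling and the cone-volume identity, whereas the paper simply asserts it; your version also avoids the paper's normalization of $q^0$ to the origin by symmetry. Two harmless imprecisions worth fixing: the distance is $|n\cdot v_0-c|/\|n\|$ with a nonzero integer numerator, so you only get $h\geq 1/\|n\|$ rather than equality (which is all you use); and the components of the \emph{primitive} normal are the cofactor determinants divided by their common gcd, so you should either drop primitivity and work with the cofactor normal $a$ directly or note that $\|n\|\leq\|a\|$.
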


The article is organised as follows. We extend to $\varepsilon_i(d,k)$ a number of results that have stated in \cite{DezaLiuPournin2025,DezaLiuPournin2024,DezaOnnPokuttaPournin2024} for $\varepsilon(d,k)$ and prove (\ref{DLP3.sec.1.eq.1}) in Section \ref{DLP3.sec.1.5}. We then establish Theorem \ref{DLP3.sec.1.thm.1} in Section \ref{DLP3.sec.2} and Theorem \ref{DLP3.sec.1.thm.3} in Sections \ref{DLP3.sec.3} and \ref{DLP3.sec.3.5}.

\section{Some properties of $\varepsilon_i(d,k)$ and $\varepsilon_i^u(d,k)$}\label{DLP3.sec.1.5}

By \cite[Theorem 5.1]{DezaOnnPokuttaPournin2024}, $\varepsilon(d,k)$ is a decreasing function of $d$---we mean this in the strict sense---for all fixed $k$. We first extend that property to $\varepsilon_i(d,k)$

\begin{thm}\label{DLP3.sec.1.5.thm.1}
For any positive integer $k$ and non-negative integer $i$, $\varepsilon_i(d,k)$, $\varepsilon_i^u(d,k)$, and $\varepsilon^u(d,k)$ are decreasing functions of $d$.
\end{thm}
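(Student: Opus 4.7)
The plan is to prove strict decrease for each of $\varepsilon_i$, $\varepsilon_i^u$, and $\varepsilon^u$ separately, by exhibiting for every admissible $d$ a lattice $(d+1,k)$-simplex in which the analogous pair of opposite faces is strictly closer than $\varepsilon_i(d,k)$ (respectively $\varepsilon_i^u(d,k)$). The claim for $\varepsilon^u$ will then follow from the definition (\ref{DLP3.sec.1.eq.2}): if $j^*\leq(d-1)/2$ attains the minimum for $\varepsilon^u(d,k)$, then $j^*\leq d/2$ also lies in the range for $\varepsilon^u(d+1,k)$, and $\varepsilon^u(d+1,k)\leq\varepsilon_{j^*}^u(d+1,k)<\varepsilon_{j^*}^u(d,k)=\varepsilon^u(d,k)$.

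A single construction will serve both $\varepsilon_i^u$ and $\varepsilon_i$. I fix a lattice $(d,k)$-simplex $S$ whose opposite faces $P$ (of dimension $i$) and $Q$ (of dimension $d-i-1$) realize the relevant quantity, and pick any vertex $p_0$ of $P$. I embed $S$ into $\mathbb{R}^{d+1}$ by appending a zero last coordinate and adjoin the lattice point $v=(p_0,1)\in[0,k]^{d+1}$. The $d+2$ resulting points are affinely independent because the lifted $S$ lies in the hyperplane $\{x_{d+1}=0\}$ and $v$ does not, so they are the vertices of a lattice $(d+1,k)$-simplex $S'$ with $i$-face $P'=P\times\{0\}$ and opposite $(d-i)$-face $Q'=(Q\times\{0\})\cup\{v\}$.

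For $\varepsilon_i^u$, I will compute the distance between $\mathrm{aff}(P')$ and $\mathrm{aff}(Q')$ by choosing any $q_0\in Q$, setting $U=\mathrm{span}(P-p_0)+\mathrm{span}(Q-q_0)\subseteq\mathbb{R}^d$, and decomposing $p_0-q_0=w_\parallel+w_\perp$ with $w_\parallel\in U$ and $w_\perp\perp U$. Since $\|w_\perp\|=\varepsilon_i^u(d,k)>0$, projecting out the $U\times\{0\}$ direction reduces the problem to a one-parameter optimization whose closed form is $\varepsilon_i^u(d,k)/\sqrt{1+\varepsilon_i^u(d,k)^2}$, strictly less than $\varepsilon_i^u(d,k)$. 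For $\varepsilon_i$, let $p^*\in\mathrm{conv}(P)$ and $q^*\in\mathrm{conv}(Q)$ realize $\varepsilon_i(d,k)>0$. The point $((1-\mu)q^*+\mu p_0,\mu)$ lies in $\mathrm{conv}(Q')$ for $\mu\in[0,1]$, and expanding the squared distance to $(p^*,0)\in\mathrm{conv}(P')$ as a quadratic in $\mu$ gives derivative $2\langle q^*-p^*,p_0-p^*\rangle-2\varepsilon_i(d,k)^2$ at $\mu=0$. The first-order optimality condition for the projection of $q^*$ onto $\mathrm{conv}(P)$ forces $\langle q^*-p^*,p_0-p^*\rangle\leq 0$, so the derivative is at most $-2\varepsilon_i(d,k)^2<0$, and the distance drops strictly below $\varepsilon_i(d,k)$ for small $\mu>0$.

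The delicate design choice, where I expect the argument to require care, is placing $v$ above a vertex of $P$ rather than of $Q$: the KKT condition for the projection onto $\mathrm{conv}(Q)$ instead yields $\langle q^*-p^*,q_0-p^*\rangle\geq\varepsilon_i(d,k)^2\geq 0$ for any $q_0\in Q$, which leaves the derivative at $\mu=0$ with the wrong sign and fails to give strict decrease. Once the correct anchor $p_0\in P$ is fixed, both the affine and convex computations are essentially routine.
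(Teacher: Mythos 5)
Your proposal is correct and uses essentially the same construction as the paper: lift a vertex $p_0$ of $P$ by one unit in the new coordinate and adjoin the lifted point to $Q$, so that $P$ keeps dimension $i$ and the opposite face gains a dimension. The only (minor) difference is in how strict decrease is verified: the paper moves points of both faces simultaneously along segments toward $v$ and $\tilde v$ and applies Pythagoras, getting $d(x,y)^2=\lambda^2+(1-\lambda)^2 d(p,q)^2$, whereas you keep $p^*$ fixed and invoke the obtuse-angle condition for the projection onto $\mathrm{conv}(P)$ in the convex case and an exact closed form $\varepsilon/\sqrt{1+\varepsilon^2}$ in the affine case — both variants are sound.
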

\begin{proof}
Consider two opposite faces $P$ and $Q$ of a lattice $(d,k)$-simplex $S$ and denote by $i$ the dimension of $P$. Identify $\mathbb{R}^d$ as the hyperplane of $\mathbb{R}^{d+1}$ spanned by the first $d$ coordinates. Let $v$ be a vertex of $P$ and consider the point $\tilde{v}$ that admits $v$ as its orthogonal projection on $\mathbb{R}^d$ and whose last coordinate is equal to $1$. The convex hull of $v$ and $Q$ is a $(d-i)$-dimensional lattice $(d+1,k)$\nobreakdash-simplex disjoint from $P$, which we will denote by $\tilde{Q}$. Let us compare the distance between $P$ and $Q$ with the distance between $P$ and $\tilde{Q}$.

Let $p$ be a point in $P$ and $q$ a point in $Q$ such that the distance between $p$ and $q$ is equal to the distance between $P$ and $Q$. Pick a real number $\lambda$ that satisfies $0\leq\lambda\leq1$ and consider the two points
$$
\left\{
\begin{array}{l}
x=\lambda{v}+(1-\lambda)p\mbox{,}\\
y=\lambda\tilde{v}+(1-\lambda)q\mbox{.}
\end{array}
\right.
$$

By convexity, $x$ belongs to $P$ and $y$ belongs to $\tilde{Q}$. In particular,
\begin{equation}\label{DLP3.sec.1.5.thm.1.eq.1}
d(P,\tilde{Q})\leq{d(x,y)}\mbox{.}
\end{equation}

As the vectors $\tilde{v}-v$ and $q-p$ are orthogonal, Pythagoras's theorem yields
$$
d(x,y)^2=\lambda^2+(1-\lambda)^2d(p,q)^2
$$
and as a consequence,
$$
\frac{\partial{d(x,y)^2}}{\partial\lambda}=2\lambda\bigl(1+d(p,q)^2\bigr)-2d(p,q)^2\mbox{.}
$$

Therefore, the distance of $x$ and $y$ is a decreasing function of $\lambda$ when
\begin{equation}\label{DLP3.sec.1.5.thm.1.eq.2}
0\leq{\lambda}<\frac{d(p,q)^2}{1+d(p,q)^2}\mbox{.}
\end{equation}

As the distance of $x$ and $y$ is at least equal to the distance of $p$ and $q$ when $\lambda$ is equal to $0$, for any positive $\lambda$ satisfying (\ref{DLP3.sec.1.5.thm.1.eq.2}), the distance of $x$ and $y$ is less than the distance of $P$ and $Q$. Combining this with (\ref{DLP3.sec.1.5.thm.1.eq.1}) yields
$$
d(P,\tilde{Q})<d(P,Q)\mbox{.}
$$

Note that repeating the argument by taking for $p$ and $q$ a point in the affine hull of $P$ and a point in the affine hull of $Q$ whose distance is equal to the distance between these affine hulls shows that
$$
d\bigl(\mathrm{aff}(P),\mathrm{aff}(\tilde{Q})\bigr)<d\bigl(\mathrm{aff}(P),\mathrm{aff}(Q)\bigr)\mbox{.}
$$

By construction, $P$ and $\tilde{Q}$ are opposite faces of a $(d+1)$-dimensional lattice $(d+1,k)$-simplex. As a consequence, choosing $P$ and $Q$ in such a way that their distance is equal to $\varepsilon_i(d,k)$ shows that
$$
\varepsilon_i(d+1,k)<\varepsilon_i(d,k)
$$
and choosing them so that the distance of their affine hulls is $\varepsilon_i(d,k)$ yields
$$
\varepsilon_i^u(d+1,k)<\varepsilon_i^u(d,k)\mbox{.}
$$

It immediately follows from the latter inequality that $\varepsilon^u(d,k)$ is a decreasing function of $d$ for every fixed $i$ and $k$, as desired.
\end{proof}

The property stated by Theorem \ref{DLP3.sec.1.5.thm.1} has the following consequence.

\begin{rem}\label{DLP3.sec.1.5.rem.1}
Consider two positive integers $d$ and $k$ and an integer $i$ satisfying $0\leq{i}<d$. Recall that $\varepsilon_i(d,k)$ coincides with $\varepsilon_{d-i-1}(d,k)$. Therefore
$$
\varepsilon_i(d,k)=\varepsilon_{d-i-1}(d,k)<\varepsilon_{d-i-1}(d+1,k)=\varepsilon_{i+1}(d+1,k)
$$
\end{rem}

Let us now state a consequence of Theorem \ref{DLP3.sec.1.5.thm.1} (and Remark \ref{DLP3.sec.1.5.rem.1}) that will be useful later in order to establish an expression for $\varepsilon_0(3,k)$.

\begin{cor}\label{DLP3.sec.1.5.cor.1}
Consider two opposite faces $P$ and $Q$ of a $d$-dimensional lattice $(d,k)$-simplex such that $P$ has dimension $i$. If
$$
d(P,Q)=\varepsilon_i(d,k)
$$
then 
$$
d(P,Q)=d\bigl(\mathrm{aff}(P),\mathrm{aff}(Q)\bigr)\mbox{.}
$$
\end{cor}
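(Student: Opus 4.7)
The plan is to argue by contradiction, so assume $d(P,Q) > d(\mathrm{aff}(P), \mathrm{aff}(Q))$ and let $(p,q) \in P \times Q$ realize $d(P,Q)$. The first step is to locate at least one of $p,q$ on a proper face. By the standard first-order optimality condition, if $p$ lay in the relative interior of $P$ and $q$ in that of $Q$, then $q-p$ would be orthogonal to both $\mathrm{aff}(P)$ and $\mathrm{aff}(Q)$, making $(p,q)$ a pair of closest points between these two affine hulls and contradicting the standing assumption. Hence, up to exchanging the roles of $P$ and $Q$ (and invoking the symmetry $\varepsilon_i(d,k) = \varepsilon_{d-i-1}(d,k)$ established earlier), I may assume that $p$ lies on a proper face $F$ of $P$ of some dimension $j < i$.

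The next step is to use $F$ to extract an auxiliary opposite pair in $S$ that witnesses a comparison between $\varepsilon_j(d,k)$ and $\varepsilon_i(d,k)$. Let $F^c$ denote the opposite face of $F$ in $S$, whose vertex set is $V(S) \setminus V(F) = (V(P) \setminus V(F)) \cup V(Q)$; in particular $\dim F^c = d-j-1$ and $Q \subseteq F^c$. Since $p \in F$ and $q \in Q \subseteq F^c$, this gives
$$
d(F, F^c) \le d(p,q) = \varepsilon_i(d,k),
$$
and because $(F,F^c)$ is an opposite pair in the $(d,k)$-simplex $S$ with $\dim F = j$, this witnesses $\varepsilon_j(d,k) \le \varepsilon_i(d,k)$.

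Finally, I would derive a contradiction with this inequality by combining Theorem~\ref{DLP3.sec.1.5.thm.1} with Remark~\ref{DLP3.sec.1.5.rem.1}. Iterating the remark $i-j$ times should produce the strict chain $\varepsilon_j(d,k) > \varepsilon_{j+1}(d+1,k) > \cdots > \varepsilon_i(d+i-j,k)$, while the theorem's monotonicity in $d$ yields $\varepsilon_i(d,k) > \varepsilon_i(d+i-j,k)$. The main obstacle is that these two inequalities pin $\varepsilon_j(d,k)$ and $\varepsilon_i(d,k)$ from the same side at an auxiliary dimension $d+i-j$, so closing the argument requires a careful interleaving—possibly via the lifting construction used in the proof of Theorem~\ref{DLP3.sec.1.5.thm.1} applied specifically to a vertex $v \in V(P) \setminus V(F)$, producing a competing opposite pair whose distance is strictly smaller than $\varepsilon_i(d,k)$—in order to upgrade the non-strict bound $\varepsilon_j(d,k) \le \varepsilon_i(d,k)$ to the required contradiction $\varepsilon_j(d,k) > \varepsilon_i(d,k)$. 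The symmetric case in which $q$ (rather than $p$) lies on a proper face is handled identically after swapping $P$ and $Q$.
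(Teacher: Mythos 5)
There is a genuine gap, and it is exactly the one you flag at the end: your argument reduces the corollary to showing $\varepsilon_j(d,k)>\varepsilon_i(d,k)$ for $j<i$ \emph{at the same dimension} $d$, and no combination of Theorem~\ref{DLP3.sec.1.5.thm.1} and Remark~\ref{DLP3.sec.1.5.rem.1} can deliver this: both results only compare values across \emph{different} ambient dimensions, and as you observe they push $\varepsilon_j(d,k)$ and $\varepsilon_i(d,k)$ down to the common quantity $\varepsilon_i(d+i-j,k)$ from the same side. Worse, the inequality $\varepsilon_j(d,k)\leq\varepsilon_i(d,k)$ that you derive is not actually false in general, so no ``careful interleaving'' can turn it into a contradiction: Table~\ref{DLP3.sec.1.tab.1.2} gives $\varepsilon_1(5,1)=1/\sqrt{58}<1/\sqrt{55}=\varepsilon_2(5,1)$, so monotonicity of $\varepsilon_i(d,k)$ in $i$ fails. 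The step where the information is lost is the passage from the pair $(F,Q)$ to the pair $(F,F^c)$: replacing $Q$ by the much larger opposite face $F^c$ throws away the fact that $\dim F+\dim Q\leq d-2$.

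That dimension count is precisely what the paper exploits, and it is the missing idea. Since $F$ and $Q$ are disjoint faces of $S$ whose dimensions sum to at most $d-2$, they lie in a common hyperplane $H$ of $\mathbb{R}^d$. After permuting coordinates so that a normal vector to $H$ has nonzero last coordinate, the orthogonal projection $\pi$ onto the first $d-1$ coordinates restricts to a bijection from $H$ to $\mathbb{R}^{d-1}$ carrying lattice $(d,k)$-polytopes in $H$ to lattice $(d-1,k)$-polytopes; being $1$-Lipschitz, it yields two opposite faces $\pi(F)$ and $\pi(Q)$ of a $(d-1)$-dimensional lattice $(d-1,k)$-simplex at distance at most $\varepsilon_i(d,k)$, with $\pi(F)$ of dimension $i$ or $i-1$. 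This gives $\varepsilon_i(d-1,k)\leq\varepsilon_i(d,k)$ or $\varepsilon_{i-1}(d-1,k)\leq\varepsilon_i(d,k)$, contradicting the strict monotonicity in $d$ from Theorem~\ref{DLP3.sec.1.5.thm.1} in the first case and Remark~\ref{DLP3.sec.1.5.rem.1} in the second. In short: the contradiction must come from dropping the ambient dimension by one, not from comparing two indices $i$ and $j$ within the same cube. Your first step (one of $p,q$ lies in a proper face, by first-order optimality) is correct and matches the paper; everything after that needs to be replaced by the projection argument.
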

\begin{proof}
Assume that the distance between $P$ and $Q$ is equal to $\varepsilon_i(d,k)$ and, for contradiction that this distance is less than the distance between the affine hulls of $P$ and $Q$. In this case, the distance between $P$ and $Q$ must be the distance between a face $F$ of $P$ and a face $G$ of $Q$ such that either $F$ is $(i-1)$-dimensional and $G$ is equal to $Q$ or $F$ is equal to $P$ and $G$ is $(d-i-2)$\nobreakdash-dimensional. In particular, there exists a hyperplane $H$ of $\mathbb{R}^d$ that contains $F$ and $G$. Denote by $a$ a non-zero vector orthogonal to $H$. Since $a$ is non-zero, the coordinates of this vector cannot be all equal to zero and we can assume without loss of generality that $a_d$ is non-zero by, if needed permuting the coordinates of $\mathbb{R}^d$, which does not change the distance between $P$ and $Q$.

Now identify $\mathbb{R}^{d-1}$ with the hyperplane spanned by the first $d-1$ coordinates of $\mathbb{R}^d$ and denote by $\pi$ the orthogonal projection from $\mathbb{R}^d$ on $\mathbb{R}^{d-1}$. Since $a_d$ is non-zero, $\pi$ induces a bijection from $H$ to $\mathbb{R}^{d-1}$. Moreover, that bijection sends a lattice $(d,k)$-polytope contained in $H$ to a lattice $(d-1,k)$-polytope. Recall that the distance between $P$ and $Q$ is equal to either the distance between $F$ and $G$. As $\pi$ is $1$-Lipschitz it follows that
\begin{equation}\label{DLP3.sec.1.5.cor.1.eq.1}
d\bigl(\pi(F),\pi(G)\bigr)\leq\varepsilon_i(d,k)\mbox{.}
\end{equation}

By construction, $\pi(F)$ and $\pi(G)$ are two opposite faces of a $(d-1)$-dimensional lattice $(d-1,k)$-simplex such that $\pi(F)$ is either $i$- or $(i-1)$-dimensional. If $\pi(F)$ is $i$-dimensional, then it follows from (\ref{DLP3.sec.1.5.cor.1.eq.1}) that $\varepsilon_i(d-1,k)$ is at most $\varepsilon_i(d,k)$ which contradicts Theorem \ref{DLP3.sec.1.5.thm.1} and if $\pi(F)$ is $(i-1)$-dimensional, then (\ref{DLP3.sec.1.5.cor.1.eq.1}) implies  that $\varepsilon_{i-1}(d-1,k)$ is at most $\varepsilon_i(d,k)$ which contradicts Remark \ref{DLP3.sec.1.5.rem.1}.
\end{proof}

\begin{rem}\label{DLP3.sec.1.5.rem.3}
For fixed $d$ and $k$, the computational procedure presented in \cite{DezaLiuPournin2024} amounts to generate all the possible pairs $P$ and $Q$ exhaustively and to compute the distance of their affine hulls. Therefore it really computes $\varepsilon_i^u(d,k)$. However, the quantity studied in \cite{DezaLiuPournin2025,DezaLiuPournin2024,DezaOnnPokuttaPournin2024} is $\varepsilon(d,k)$ and it turns out that for all the values of $d$ and $k$ investigated so far, $\varepsilon(d,k)$ coincides with $\varepsilon^u(d,k)$. Indeed, for these values of $d$ and $k$, there exists always a pair $P$ and $Q$ satifying
$$
d(P,Q)=d\bigl(\mathrm{aff}(P),\mathrm{aff}(Q)\bigr)=\varepsilon^u(d,k)
$$
which proves that $\varepsilon(d,k)$ coincides with $\varepsilon^u(d,k)$. As we mentioned in the introduction, that observation does not carry over to $\varepsilon_i(d,k)$ and $\varepsilon_i^u(d,k)$. However, it follows from Corollary \ref{DLP3.sec.1.5.cor.1} that the exhaustive enumeration procedure from \cite{DezaLiuPournin2024} allows to compute $\varepsilon_i(d,k)$ in this case nonetheless: it suffices to check, during that procedure, whether the distance of a considered pair $P$ and $Q$ coincides with the distance of their affine hulls using \cite[Remark 2]{DezaLiuPournin2024}.
\end{rem}

Now let us recall that $\varepsilon(d,k)$ is defined following \cite{DezaOnnPokuttaPournin2024} as the smallest possible distance between two disjoint lattice $(d,k)$-polytopes. However, in the introduction we have defined $\varepsilon_i(d,k)$ as the smallest possible distance between an $i$\nobreakdash-dimensional face of a $d$\nobreakdash-dimensional lattice $(d,k)$-simplex and the opposite $(d-i-1)$\nobreakdash-dimensional face. While opposite faces of a simplex are necessarily disjoint, the converse is not true and (\ref{DLP3.sec.1.eq.1}) is therefore not immediate. Theorem~5.2 from~\cite{DezaOnnPokuttaPournin2024} goes a long way towards proving (\ref{DLP3.sec.1.eq.1}). Indeed, it states that $\varepsilon(d,k)$ is achieved as the distance between two lattice $(d,k)$-simplices $P$ and $Q$ whose dimensions sum to $d-1$. There only remains to prove that $P$ and $Q$ are opposite faces of a simplex or, equivalently, that the affine hull of $P$ and $Q$ is $\mathbb{R}^d$. We show that this property holds more generally for $\varepsilon_i(d,k)$.

\begin{thm}\label{DLP3.sec.1.5.thm.2}
For any positive integers $d$ and $k$, the smallest possible distance between two disjoint lattice $(d,k)$-simplices, one of which is  $i$-dimensional and the other $(d-i-1)$-dimensional is equal to $\varepsilon_i(d,k)$.
\end{thm}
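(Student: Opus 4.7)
My plan is to prove the result by induction on $d$, combined with the orthogonal projection procedure from Corollary~\ref{DLP3.sec.1.5.cor.1}. Denote by $\varepsilon_i'(d,k)$ the smallest possible distance between two disjoint lattice $(d,k)$-simplices of dimensions $i$ and $d-i-1$. The inequality $\varepsilon_i'(d,k)\leq\varepsilon_i(d,k)$ is immediate because opposite faces of a lattice $(d,k)$-simplex form such a pair. For the reverse inequality, take a minimizing pair $(P,Q)$ and show that $\mathrm{aff}(P\cup Q)=\mathbb{R}^d$; the $d+1$ vertices of $P\cup Q$ are then affinely independent, so $P$ and $Q$ are opposite faces of a lattice $(d,k)$-simplex, yielding $\varepsilon_i(d,k)\leq d(P,Q)=\varepsilon_i'(d,k)$.

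Suppose for contradiction that $\mathrm{aff}(P\cup Q)$ is contained in a hyperplane $H$. Fix a closest pair $(\tilde{x},\tilde{y})\in P\times Q$ and let $\tilde{F}\subseteq P$ and $\tilde{G}\subseteq Q$ be the minimal faces whose relative interiors contain $\tilde{x}$ and $\tilde{y}$. If $\tilde{F}$ is a proper face of $P$, choose a vertex $v$ of $P$ not in $\tilde{F}$: the face $P'$ of $P$ with vertex set $V(P)\setminus\{v\}$ has dimension $i-1$, still contains $\tilde{x}$, and therefore satisfies $d(P',Q)=d(P,Q)=\varepsilon_i'(d,k)$. After permuting coordinates so that a normal to $H$ has nonzero last component, the orthogonal projection $\pi\colon\mathbb{R}^d\to\mathbb{R}^{d-1}$ dropping the last coordinate is a $1$-Lipschitz bijection from $H$ onto $\mathbb{R}^{d-1}$ sending lattice points of $[0,k]^d$ to lattice points of $[0,k]^{d-1}$, exactly as in the proof of Corollary~\ref{DLP3.sec.1.5.cor.1}. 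Then $\pi(P')$ and $\pi(Q)$ are disjoint lattice $(d-1,k)$-simplices of complementary dimensions $i-1$ and $d-i-1$ in $\mathbb{R}^{d-1}$, at distance at most $\varepsilon_i'(d,k)$. The induction hypothesis yields $\varepsilon_{i-1}(d-1,k)=\varepsilon_{i-1}'(d-1,k)\leq\varepsilon_i'(d,k)\leq\varepsilon_i(d,k)$, which contradicts the strict inequality $\varepsilon_i(d,k)<\varepsilon_{i-1}(d-1,k)$ coming from Remark~\ref{DLP3.sec.1.5.rem.1} (read with the orientation forced by the monotonicity in $d$ of $\varepsilon_i$ from Theorem~\ref{DLP3.sec.1.5.thm.1}). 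A symmetric argument, using the monotonicity $\varepsilon_i(d-1,k)>\varepsilon_i(d,k)$, rules out the case where $\tilde{G}$ is a proper face of $Q$.

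The delicate remaining case is $\tilde{x}\in\mathrm{relint}(P)$ and $\tilde{y}\in\mathrm{relint}(Q)$, and this is the main obstacle: no face can be discarded, so the projection-plus-induction mechanism does not apply directly. In this case $\tilde{y}-\tilde{x}$ is orthogonal to the direction spaces $V_P$ and $V_Q$, so $\dim\mathrm{aff}(P\cup Q)=\dim(V_P+V_Q)+1$, and the assumption $\dim\mathrm{aff}(P\cup Q)\leq d-1$ forces $\dim(V_P\cap V_Q)\geq 1$ via the Grassmann identity. Select any nonzero $u\in V_P\cap V_Q$ and consider the one-parameter family $(\tilde{x}+\lambda u,\tilde{y}+\lambda u)$: since $u$ is a direction of both $\mathrm{aff}(P)$ and $\mathrm{aff}(Q)$ and the two shifts cancel, this remains a closest pair of $P\times Q$ for every $\lambda$ such that both endpoints stay in $P$ and $Q$. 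Expressing the endpoints in barycentric coordinates shows that some barycentric coordinate reaches $0$ at a finite $\lambda_+>0$, where the corresponding endpoint lies on a proper face of its simplex; this reduces the configuration to the previous case and completes the induction, with base case $d=1$ where $\varepsilon_0'(1,k)=1=\varepsilon_0(1,k)$ by direct inspection.
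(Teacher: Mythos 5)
Your proof is correct and follows essentially the same route as the paper's: induction on $d$, the hyperplane-plus-orthogonal-projection reduction, and the same contradictions with Theorem~\ref{DLP3.sec.1.5.thm.1} and Remark~\ref{DLP3.sec.1.5.rem.1} according to which point of the closest pair lies on a proper face. The only difference is cosmetic: you dispose of the case where both closest points are relatively interior upstairs in $\mathbb{R}^d$ (Grassmann identity plus sliding along a common direction of $\mathrm{aff}(P)$ and $\mathrm{aff}(Q)$), whereas the paper projects first and intersects translates of $\pi(P)$ and $\pi(Q)$ inside the hyperplane of $\mathbb{R}^{d-1}$ orthogonal to $\pi(q-p)$ --- the two dimension counts are equivalent.
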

\begin{proof}
The proof is by induction on $d$. First observe that the statement is immediate when $d$ is equal to $1$ because $\varepsilon_0(1,k)$ is equal to~$1$, the smallest possible distance between two distinct integers. Assume that $d$ is at least $2$ and consider two disjoint lattice $(d,k)$-simplices $P$ and $Q$ such that $P$ has dimension $i$ and $Q$ dimension $d-i-1$. Assume for contradiction that
\begin{equation}\label{DLP3.sec.1.5.thm.2.eq.-3}
d(P,Q)<\varepsilon_i(d,k)\mbox{.}
\end{equation}

It follows, by the definition of $\varepsilon_i(d,k)$, that $P$ and $Q$ cannot be opposite faces of a $d$-dimensional simplex. Equivalently, there exists a hyperplane $H$ of $\mathbb{R}^d$ that contains $P$ and $Q$. Let $a$ be a non-zero vector orthogonal to $H$. Since $a$ is non-zero, so is one of the coordinates of $a$, and we can assume without loss of generality that $a_d$ is non-zero by permuting the coordinates of $\mathbb{R}^d$ if needed (which does not change the distance between $P$ and $Q$). Now, identify $\mathbb{R}^{d-1}$ with the hyperplane of $\mathbb{R}^d$ spanned by the first $d-1$ coordinates and denote by $\pi$ the orthogonal projection from $\mathbb{R}^d$ on $\mathbb{R}^{d-1}$. As $a_d$ is non-zero, $\pi$ induces a bijection between $H$ and $\mathbb{R}^{d-1}$ and since $\pi$ is $1$-Lipschitz,
\begin{equation}\label{DLP3.sec.1.5.thm.2.eq.-2}
d\bigl(\pi(P),\pi(Q)\bigr)\leq{d(P,Q)}\mbox{.}
\end{equation}

Consider a point $p$ in $P$ and a point $q$ in $Q$ such that the distance between $\pi(P)$ and $\pi(Q)$ is equal to the distance between $\pi(p)$ and $\pi(q)$. Observe that if $p$ is in a proper face of $P$, then by induction
\begin{equation}\label{DLP3.sec.1.5.thm.2.eq.-1}
\varepsilon_{i-1}(d-1,k)\leq{d\bigl(\pi(P),\pi(Q)\bigr)}
\end{equation}
and similarly, if $q$ is in a proper face of $Q$, then by induction
\begin{equation}\label{DLP3.sec.1.5.thm.2.eq.0}
\varepsilon_i(d-1,k)\leq{d\bigl(\pi(P),\pi(Q)\bigr)}\mbox{.}
\end{equation}

In the former case, combining (\ref{DLP3.sec.1.5.thm.2.eq.-3}), (\ref{DLP3.sec.1.5.thm.2.eq.-2}), and (\ref{DLP3.sec.1.5.thm.2.eq.-1}) contradicts Remark \ref{DLP3.sec.1.5.rem.1}. In the latter case, combining (\ref{DLP3.sec.1.5.thm.2.eq.-3}), (\ref{DLP3.sec.1.5.thm.2.eq.-2}), and (\ref{DLP3.sec.1.5.thm.2.eq.0}) contradicts Theorem \ref{DLP3.sec.1.5.thm.1}. It therefore suffices to prove that the distance between $\pi(P)$ and $\pi(Q)$ is necessaily achieved by a point in $\pi(P)$ and a point in $\pi(Q)$ one of which is in a proper face of the corresponding simplex. Let us assume that $p$ and $q$ are each contained in the relative interior of the corresponding simplex (otherwise we are done). In that case, observe that $\pi(P)$ and $\pi(Q)$ are both necessarily orthogonal to $\pi(q-p)$. As a consequence $\pi(P)-\pi(p)$ and $\pi(Q)-\pi(q)$ are two simplices contained in the linear hyperplane of $\mathbb{R}^{d-1}$ orthogonal to $\pi(q-p)$. As the dimensions of these two simplices sum to $d-1$, their intersection must be a polytope of positive dimension. Pick a point $x$ in a proper face of that intersection. By construction, either $x+\pi(p)$ is in a proper face of $\pi(P)$ or $x+\pi(q)$ is in a proper face of $\pi(Q)$. As the distance between $x+\pi(p)$ and $x+\pi(q)$ is equal to the distance between $\pi(p)$ and $\pi(q)$, this completes the proof.
\end{proof}

The remainder of the section is devoted to recalling the algebraic model for $\varepsilon(d,k)$ introduced in \cite{DezaLiuPournin2024}. This model, that we will state in the case of $\varepsilon_i^u(d,k)$, will be one of the main ingredients in the proofs of Sections~\ref{DLP3.sec.2},~\ref{DLP3.sec.3}, and~\ref{DLP3.sec.3.5}. Consider two opposite faces $P$ and $Q$ of a lattice $(d,k)$-simplex $S$ and denote by $i$ the dimension of $P$. Further denote by $p^0$ to $p^i$ the vertices of $P$ and by $q^0$ to $q^{d-i-1}$ the vertices of $Q$ and consider the $d\mathord{\times}(d-1)$ matrix $A$ whose column $j$ is the vector $p^j-p^0$ if $j$ is at most $i$ and the vector $q^{j-i}-q^0$ otherwise. Since $P$ and $Q$ are opposite faces of $S$ these vectors collectively span a hyperplane of $\mathbb{R}^d$ and therefore $A$ has rank $d-1$. Further denote by $b$ the vector $q^0-p^0$. According to \cite[Lemma 2]{DezaLiuPournin2024}, if $A^tA$ is non-singular, then
$$
d\bigl(\mathrm{aff}(P),\mathrm{aff}(Q)\bigr)=\bigl\|A(A^tA)^{-1}A^tb-b\bigr\|\mbox{.}
$$

In our case, though, $A^tA$ is necessarily non-singular. Indeed, by the Cauchy--Binet formula  \cite[Example 10.31]{ShafarevichRemizov2013} the determinant of this matrix is
\begin{equation}\label{DLP3.sec.1.5.eq.1}
\mathrm{det}\bigl(A^tA\bigr)=\sum_{j=1}^d\mathrm{det}(A_j)^2
\end{equation}
where $A_j$ denotes the $(d-1)\mathord{\times}(d-1)$ matrix obtained by removing row $j$ from the matrix $A$. Since $A$ has rank $d-1$, one of the matrices $A_1$ to $A_d$ is non-singular and (\ref{DLP3.sec.1.5.eq.1}) implies that $A^tA$ is non-singular as well. 
\begin{rem}
With these notation, the lattice vector
$$
a=\bigl(\mathrm{det}(A_1),-\mathrm{det}(A_2),\ldots,(-1)^{d+1}\mathrm{det}(A_d)\bigr)
$$
is  orthogonal to both $P$ and $Q$. Indeed, $a\mathord{\cdot}(p^j-p^0)$ is the determinant of the $d\mathord{\times}d$ matrix obtained by prepending $p^j-p^0$ to $A$ as a first column. That matrix is necessarily singular since two of its columns coincide, proving that $a\mathord{\cdot}(p^j-p^0)$ is equal to $0$ and therefore that $a$ is orthogonal to $P$. The same argument shows that $a\mathord{\cdot}(q^j-q^0)$ also vanishes and that $a$ is orthogonal to $Q$ as well.
\end{rem}

The entries of $A$ each are a difference between two non-negative numbers at most $k$ and therefore belong to the interval $[-k,k]$. If $k$ is equal to $1$, one can always assume, thanks to the symmetries of the unit hypercube $[0,1]^d$ that $q^0$ is the origin of $\mathbb{R}^d$. If in addition, $i$ is equal to $0$, then the columns of $A$ are the vectors $q^1$ to $q^{d-1}$ and therefore, each entry of $A$ is either $0$ or $1$. We obtain the following statement as a consequence of these observations.

\begin{lem}\label{DLP3.sec.1.5.lem.1}
There exist a $d\mathord{\times}(d-1)$ matrix $A$ and a vector $b$ from $\mathbb{R}^d$ both with integer entries of absolute value at most $k$ such that
\begin{enumerate}
\item[(i)] $A^tA$ is non-singular and
\item[(ii)] $\varepsilon_i^u(d,k)=\bigl\|A(A^tA)^{-1}A^tb-b\bigr\|$.
\end{enumerate}

If in addition, $i$ is equal to $0$ and $k$ is equal to $1$, then it can be further required that the matrix $A$ has binary entries.
\end{lem}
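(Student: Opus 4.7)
The plan is to translate the geometric quantity $\varepsilon_i^u(d,k)$ directly into the matrix formula using exactly the construction already laid out in the paragraph preceding the lemma. First, I would pick two opposite faces $P$ and $Q$ of a lattice $(d,k)$-simplex $S$ with $\dim P = i$ such that the affine hull distance between them equals $\varepsilon_i^u(d,k)$; existence of such a minimizer is automatic because there are only finitely many lattice $(d,k)$-simplices to consider. Labelling the vertices of $P$ as $p^0,\ldots,p^i$ and of $Q$ as $q^0,\ldots,q^{d-i-1}$, I would take $A$ to be the $d\mathord\times(d-1)$ matrix whose columns are $p^j-p^0$ for $1\leq j\leq i$ and $q^{j-i}-q^0$ for $i+1\leq j\leq d-1$, and set $b=q^0-p^0$.

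For (i), I would invoke the observation already made above that because $P$ and $Q$ are opposite faces of the $d$-simplex $S$, the columns of $A$ span the affine hull of $S$ translated to the origin, which is a hyperplane, so $A$ has rank $d-1$. The Cauchy--Binet identity
\[
\det(A^tA)=\sum_{j=1}^d \det(A_j)^2
\]
then forces $\det(A^tA)>0$ because at least one of the square minors $A_j$ must be non-singular. For (ii), since $A^tA$ is non-singular I would cite \cite[Lemma 2]{DezaLiuPournin2024} verbatim, which gives the formula
\[
d\bigl(\mathrm{aff}(P),\mathrm{aff}(Q)\bigr)=\bigl\|A(A^tA)^{-1}A^tb-b\bigr\|
\]
for the distance between the two affine hulls in terms of $A$ and $b$.

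The integrality and $|\cdot|\leq k$ bound on the entries of $A$ and $b$ follow immediately since every such entry is a difference of two coordinates from $[0,k]\cap\mathbb{Z}$. For the final assertion, when $i=0$ and $k=1$, I would use the symmetry group of $[0,1]^d$ generated by the coordinate reflections $x_\ell\mapsto 1-x_\ell$: this group acts transitively on the vertex set $\{0,1\}^d$, sends lattice $(d,1)$-simplices to lattice $(d,1)$-simplices, and consists of isometries so it preserves affine hull distances. Composing with the appropriate such symmetry one may assume $q^0$ is the origin of $\mathbb{R}^d$. Under this normalization the columns of $A$ become $q^1,\ldots,q^{d-1}\in\{0,1\}^d$, so $A$ is binary.

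The plan has no serious obstacle because the preceding discussion already contains the key algebraic identity and the rank argument; the only point that needs care is the last reduction, namely checking that the hypercube reflection normalizing $q^0$ to the origin is lawful, and this reduces to the fact that such reflections are lattice-preserving isometries of $\mathbb{R}^d$.
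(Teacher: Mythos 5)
Your proposal is correct and follows essentially the same route as the paper, which presents this lemma as a direct consequence of the preceding discussion: the same matrix $A$ and vector $b$, the same appeal to \cite[Lemma 2]{DezaLiuPournin2024} and to the Cauchy--Binet identity for non-singularity, and the same hypercube symmetry to normalize $q^0$ to the origin in the case $i=0$, $k=1$. One minor wording slip: the columns of $A$ span a $(d-1)$-dimensional \emph{linear} subspace (the sum of the direction spaces of $\mathrm{aff}(P)$ and $\mathrm{aff}(Q)$), not ``the affine hull of $S$ translated to the origin,'' which would be all of $\mathbb{R}^d$; the rank conclusion you draw is nevertheless the right one.
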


\begin{rem}\label{DLP3.sec.1.5.rem.2}
Even though we will not make use of this in the sequel, Lemma~\ref{DLP3.sec.1.5.lem.1} still holds when $\varepsilon_i^u(d,1)$ is replaced with $\varepsilon_i(d,1)$, due to Corollary \ref{DLP3.sec.1.5.cor.1}. In turn, this lemma further holds when $\varepsilon_i^u(d,1)$ is replaced by $\varepsilon(d,1)$ according to Theorem \ref{DLP3.sec.1.5.thm.2}, and when $\varepsilon_i^u(d,1)$ is replaced by $\varepsilon^u(d,1)$ by (\ref{DLP3.sec.1.eq.2}).
\end{rem}

\section{An improved lower bound on $\varepsilon(d,k)$}\label{DLP3.sec.2}

This section is devoted to improving the lower bound on $\varepsilon(d,k)$ from \cite{DezaOnnPokuttaPournin2024} and the lower bound on $\varepsilon_0^u(d,1)$ from~\cite{GrahamSloane1984}. The former improvement will in fact be obtained by lower bounding $\varepsilon_i^u(d,k)$ independently from $i$, therefore providing a lower bound on $\varepsilon^u(d,k)$ and in turn on $\varepsilon(d,k)$. All the proofs are based on a refinement of Lemma~\ref{DLP3.sec.1.5.lem.1} obtained from arguments on matrix algebra.


\begin{prop}\label{DLP3.sec.2.prop.1}
If $M$ is a symmetric $n\mathord{\times}n$ idempotent matrix with real coefficients and $b$ is a vector from $\mathbb{R}^n$, then
$$
\|Mb-b\|^2=\|b\|^2-b^tMb\mbox{.}
$$
\end{prop}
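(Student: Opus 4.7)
The plan is a direct expansion of the squared norm, using symmetry and idempotency to collapse the resulting expression. Since the claim is purely algebraic with no geometric content, no clever idea is needed; the proof is essentially a two-line calculation, and I do not expect any substantial obstacle.

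First I would write
$$
\|Mb-b\|^2 = (Mb-b)^t(Mb-b) = b^tM^tMb - b^tM^tb - b^tMb + b^tb.
$$
The symmetry hypothesis $M^t = M$ lets me rewrite $M^tM$ as $M^2$ and merge the two cross terms $b^tM^tb$ and $b^tMb$ (which are each equal to $b^tMb$, a scalar), yielding
$$
\|Mb-b\|^2 = b^tM^2b - 2b^tMb + \|b\|^2.
$$
Then the idempotency hypothesis $M^2 = M$ replaces $b^tM^2b$ by $b^tMb$, and the two $b^tMb$ terms combine to leave
$$
\|Mb-b\|^2 = \|b\|^2 - b^tMb,
$$
as claimed.

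If greater concision is desired, an alternative presentation would observe that $M$ is an orthogonal projection (symmetric plus idempotent), so that $Mb$ and $b - Mb$ are orthogonal; Pythagoras then gives $\|b\|^2 = \|Mb\|^2 + \|Mb - b\|^2$, and since $\|Mb\|^2 = b^tM^tMb = b^tMb$ by symmetry and idempotency, the identity follows. Either route is a few lines and presents no real difficulty; the only point to be careful about is not to forget that both hypotheses (symmetry \emph{and} idempotency) are used, the former to turn $M^tM$ into $M^2$ and the latter to turn $M^2$ into $M$.
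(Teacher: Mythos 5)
Your proof is correct and follows essentially the same route as the paper: a direct expansion of $(Mb-b)^t(Mb-b)$, using symmetry to reduce $M^tM$ to $M^2$ and idempotency to reduce $M^2$ to $M$ (the paper merely packages the expansion as $b^t(M-I)^2b$ with $(M-I)^2=I-M$). Nothing further is needed.
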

\begin{proof}
Denote by $I$ the $n\mathord{\times}n$ identity matrix. If $M$ is symmetric, then
$$
\begin{array}{rcl}
\displaystyle\|Mb-b\|^2 &\!\!\! = \!\!\!\!& \displaystyle(Mb-b)^t(Mb-b)\\
 &\!\!\! = \!\!\!\!& \displaystyle b^t(M-I)^t(M-I)b\\
 &\!\!\! = \!\!\!\!& \displaystyle b^t(M-I)^2b\mbox{.}
\end{array}
$$

If in addition $M$ is idempotent, then
$$
\begin{array}{rcl}
\displaystyle(M-I)^2 &\!\!\! = \!\!\!\!& \displaystyle M^2-2M+I\\
 &\!\!\! = \!\!\!\!& \displaystyle I-M
\end{array}
$$
and the proposition follows.
\end{proof}

Recall that Lemma \ref{DLP3.sec.1.5.lem.1} provides a formula for $\varepsilon_i^u(d,k)$ as a function of $A$ and $b$. Thanks to Proposition \ref{DLP3.sec.2.prop.1} we can refine this formula as follows.

\begin{lem}\label{DLP3.sec.2.lem.1}
There exist a $d\mathord{\times}(d-1)$ matrix $A$ and a vector $b$ from $\mathbb{R}^d$ both with integer entries of absolute value at most $k$ such that
\begin{enumerate}
\item[(i)] $A^tA$ is non-singular and
\item[(ii)] $\varepsilon_i^u(d,k)=\sqrt{\|b\|^2-b^tA(A^tA)^{-1}A^tb}$.
\end{enumerate}

If in addition, $i$ is equal to $0$ and $k$ is equal to $1$, then it can be further required that the matrix $A$ has binary entries.
\end{lem}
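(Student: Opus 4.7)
The plan is to start from Lemma \ref{DLP3.sec.1.5.lem.1}, which already produces a matrix $A$ and vector $b$ with integer entries of absolute value at most $k$ (and binary entries in the special case $i=0$, $k=1$) such that $A^tA$ is non-singular and
$$
\varepsilon_i^u(d,k)=\bigl\|A(A^tA)^{-1}A^tb-b\bigr\|\mbox{.}
$$
Since the integrality, bound, and non-singularity conditions on $A$ and $b$ are already guaranteed, the only remaining task is to rewrite the right-hand side in the claimed form. In particular, the same pair $A$ and $b$ provided by Lemma \ref{DLP3.sec.1.5.lem.1} will work.

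The idea is to apply Proposition \ref{DLP3.sec.2.prop.1} to the matrix $M=A(A^tA)^{-1}A^t$, which is the orthogonal projector onto the column space of $A$. First I would verify that $M$ satisfies the hypotheses of the proposition. Symmetry is immediate from $(A^tA)^{-1}$ being symmetric (as the inverse of a symmetric non-singular matrix), so $M^t=A\bigl((A^tA)^{-1}\bigr)^tA^t=M$. Idempotence follows from a one-line cancellation, since $M^2=A(A^tA)^{-1}(A^tA)(A^tA)^{-1}A^t=A(A^tA)^{-1}A^t=M$.

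Having established that $M$ is a symmetric idempotent $d\mathord{\times}d$ matrix, Proposition \ref{DLP3.sec.2.prop.1} yields
$$
\bigl\|A(A^tA)^{-1}A^tb-b\bigr\|^2=\|b\|^2-b^tA(A^tA)^{-1}A^tb\mbox{.}
$$
Taking the square root of both sides and combining with the expression from Lemma \ref{DLP3.sec.1.5.lem.1} gives the required formula. There is no real obstacle here: the content of the lemma is a cosmetic reformulation of Lemma \ref{DLP3.sec.1.5.lem.1} that replaces a squared norm of a residual by the equivalent quadratic form. The only mild subtlety is ensuring that $\|b\|^2-b^tA(A^tA)^{-1}A^tb$ is manifestly non-negative so that the square root makes sense, but this is automatic because that quantity equals $\|Mb-b\|^2$ by the very identity just derived.
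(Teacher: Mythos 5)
Your proposal is correct and follows the paper's proof exactly: invoke Lemma~\ref{DLP3.sec.1.5.lem.1} for $A$ and $b$, then apply Proposition~\ref{DLP3.sec.2.prop.1} to the symmetric idempotent matrix $A(A^tA)^{-1}A^t$. The only difference is that you spell out the (routine) verification of symmetry and idempotence, which the paper simply asserts.
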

\begin{proof}
According to Lemma \ref{DLP3.sec.1.5.lem.1}, there exists a $d\mathord{\times}(d-1)$ matrix $A$ and a vector $b$ from $\mathbb{R}^d$ such that $A^tA$ is non-singular and
\begin{equation}\label{DLP3.sec.2.lem.1.eq.1}
\varepsilon_i^u(d,k)=\|A(A^tA)^{-1}A^tb-b\|\mbox{.}
\end{equation}

Moreover, both $A$ and $b$ have integer entries of absolute value at most $k$. Since $A(A^tA)^{-1}A^t$ is symmetric and idempotent, Proposition \ref{DLP3.sec.2.prop.1} yields
$$
\|A(A^tA)^{-1}A^tb-b\|^2=\|b\|^2-b^tA(A^tA)^{-1}A^tb\mbox{.}
$$

Combining this with (\ref{DLP3.sec.2.lem.1.eq.1}) completes the proof.
\end{proof}

\begin{rem}\label{DLP3.sec.2.rem.1}
According to Remark \ref{DLP3.sec.1.5.rem.2}, Lemma \ref{DLP3.sec.2.lem.1} still holds when, in its statement, $\varepsilon_i^u(d,k)$ is replaced by $\varepsilon_i(d,k)$, $\varepsilon^u(d,k)$, or $\varepsilon(d,k)$.
\end{rem}

We shall now lower bound $\varepsilon_i^u(d,k)$ in terms of the determinant of $A^tA$. Let us point out that by (\ref{DLP3.sec.1.5.eq.1}), this determinant is non-negative.

\begin{thm}\label{DLP3.sec.2.thm.1}
There exists a $d\mathord{\times}(d-1)$ matrix $A$ with integer entries of absolute value at most $k$ such that $A^tA$ is non-singular and
$$
\varepsilon^u_i(d,k)\geq\frac{1}{\sqrt{\det(A^tA)}}\mbox{.}
$$

If in addition, $i$ is equal to $0$ and $k$ is equal to $1$, then it can be further required that the matrix $A$ has binary entries.
\end{thm}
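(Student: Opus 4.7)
The plan is to start from Lemma~\ref{DLP3.sec.2.lem.1}, which already produces an integer matrix $A$ (with the extra binary constraint in the relevant special case) and an integer vector $b$ such that $A^tA$ is non-singular and
$$
\varepsilon_i^u(d,k)^2=\|b\|^2-b^tA(A^tA)^{-1}A^tb\mbox{.}
$$
The quantity on the right is the squared distance from $b$ to the column span of $A$. I would keep this $A$ as the matrix witnessing the theorem, and extract the bound purely from a determinantal identity applied to the augmented $d\mathord{\times}d$ integer matrix $[A\,|\,b]$.

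The key step is a Schur-complement computation. Forming the block matrix
$$
[A\,|\,b]^t[A\,|\,b]=\begin{pmatrix}A^tA & A^tb\\ b^tA & \|b\|^2\end{pmatrix}\mbox{,}
$$
and using the standard block-determinant formula (valid since $A^tA$ is non-singular), I would obtain
$$
\det\bigl([A\,|\,b]\bigr)^2=\det\bigl(A^tA\bigr)\bigl(\|b\|^2-b^tA(A^tA)^{-1}A^tb\bigr)\mbox{.}
$$
Substituting this into the expression from Lemma~\ref{DLP3.sec.2.lem.1} yields
$$
\varepsilon_i^u(d,k)^2=\frac{\det([A\,|\,b])^2}{\det(A^tA)}\mbox{.}
$$

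To finish, I would observe that $[A\,|\,b]$ is a $d\mathord{\times}d$ matrix with integer entries, so $\det([A\,|\,b])$ is an integer. Since the two disjoint faces witnessing $\varepsilon_i^u(d,k)$ have positive distance between their affine hulls (opposite faces of a $d$-dimensional simplex span $\mathbb{R}^d$ and therefore have disjoint affine hulls), the left-hand side above is strictly positive. Hence $\det([A\,|\,b])$ is a non-zero integer, so its absolute value is at least $1$, giving
$$
\varepsilon_i^u(d,k)\geq\frac{1}{\sqrt{\det(A^tA)}}\mbox{,}
$$
as desired. The binary refinement in the case $i=0$, $k=1$ is inherited directly from the corresponding clause of Lemma~\ref{DLP3.sec.2.lem.1}.

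The only mildly delicate point is justifying that $\det([A\,|\,b])\neq0$; everything else is routine linear algebra. I do not expect any serious obstacle here, since this non-vanishing is equivalent to the strict positivity of $\varepsilon_i^u(d,k)$, which is immediate from the fact that opposite faces of a simplex have affine hulls whose join is $\mathbb{R}^d$ and hence meet, if at all, in an empty or lower-dimensional flat---i.e.\ the two affine hulls are actually disjoint, so their distance is positive.
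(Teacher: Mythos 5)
Your proof is correct, and it follows the same overall strategy as the paper: invoke Lemma~\ref{DLP3.sec.2.lem.1} to write $\varepsilon_i^u(d,k)^2$ as a rational number with denominator $\det(A^tA)$, then use integrality and positivity of the numerator to conclude it is at least $1$. The only difference is how you establish that the numerator is a positive integer. The paper writes $(A^tA)^{-1}=C^t/\det(A^tA)$ via the cofactor matrix $C$ and observes that $\|b\|^2\det(A^tA)-b^tAC^tA^tb$ is an integer; you instead apply the block-determinant (Schur complement) identity to $[A\,|\,b]^t[A\,|\,b]$ and identify the numerator as $\det([A\,|\,b])^2$. Your identity is slightly more informative---it shows the numerator is in fact a perfect square, not merely a positive integer---though this extra information is not exploited by either argument. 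Your justification that $\det([A\,|\,b])\neq0$ (equivalently, that $\varepsilon_i^u(d,k)>0$, which holds because the affine hulls of opposite faces of a full-dimensional simplex are disjoint closed affine subspaces) is sound and matches the level of detail in the paper, which simply asserts that the left-hand side is positive. The binary refinement for $i=0$, $k=1$ is indeed inherited directly from Lemma~\ref{DLP3.sec.2.lem.1}, exactly as in the paper.
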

\begin{proof}
According to Lemma \ref{DLP3.sec.2.lem.1}, there exists a $d\mathord{\times}(d-1)$ matrix $A$ and a vector $b$ from $\mathbb{R}^d$, both of which have integer entries of absolute value at most $k$, such that $A^tA$ is non-singular and $\varepsilon_i^u(d,k)$ can be expressed as
\begin{equation}\label{DLP3.sec.2.thm.1.eq.1}
\varepsilon_i^u(d,k)=\sqrt{\|b\|^2-b^tA(A^tA)^{-1}A^tb}\mbox{.}
\end{equation}

Moreover, if $i$ and $k$ are equal to $0$ and $1$, respectively, then $A$ has binary entries. Denoting by $C$ the cofactor matrix of $A^tA$,
$$
(A^tA)^{-1}=\frac{C^t}{\det(A^tA)}\mbox{.}
$$

As a consequence,
$$
\|b\|^2-b^tA(A^tA)^{-1}A^tb=\frac{\|b\|^2\det(A^tA)-b^tAC^tA^tb}{\det(A^tA)}\mbox{.}
$$

However, since the entries of $A$, $b$, and $C$ are integers, the numerator in the right-hand side is an integer. As in addition the left-hand side is positive and, according to (\ref{DLP3.sec.1.5.eq.1}), the determinant of $A^tA$ is non-negative, 
$$
\|b\|^2-b^tA(A^tA)^{-1}A^tb\geq\frac{1}{\det(A^tA)}\mbox{.}
$$

Combining this with (\ref{DLP3.sec.2.thm.1.eq.1}) completes the proof
\end{proof}

We conclude the section by proving Theorem \ref{DLP3.sec.1.thm.1}.

\begin{proof}[Proof of Theorem \ref{DLP3.sec.1.thm.1}]
According to Theorem \ref{DLP3.sec.2.thm.1},
$$
\varepsilon_i^u(d,k)\geq\frac{1}{\sqrt{\det(A^tA)}}\mbox{,}
$$
where $A$ is a $d\mathord{\times}(d-1)$ matrix with integer entries of absolute value at most $k$ such that $A^tA$ is non-singular. Moreover, if $i$ is equal to $0$ and $k$ to $1$, then $A$ has binary entries. By (\ref{DLP3.sec.1.5.eq.1}), this inequality can rewritten as
\begin{equation}\label{DLP3.sec.1.thm.1.eq.3}
\varepsilon_i^u(d,k)\geq\frac{1}{\sqrt{\displaystyle\sum_{j=1}^d\det(A_j)^2}}\mbox{.}
\end{equation}
where $A_j$ is the $(d-1)\mathord{\times}(d-1)$ matrix obtained from $A$ by removing the $j$th row. However, by Hadamard's inequality,
$$
|\det(A_j)|\leq{k}^{d-1}\sqrt{d-1}^{d-1}
$$
and it follows from (\ref{DLP3.sec.1.thm.1.eq.3}) that
$$
\varepsilon_i^u(d,k)\geq\frac{1}{\sqrt{d}k^{d-1}\sqrt{d-1}^{d-1}}
$$
which implies the second announced inequality. Finally, assume that $i$ is equal to $0$ and that $k$ is equal to $1$. In this case, $A$ and therefore $A_j$ have binary entries. Hadamard's inequality can then be improved into
$$
|\det(A_j)|\leq\frac{\sqrt{d}^d}{2^{d-1}}
$$
which, combined with (\ref{DLP3.sec.1.thm.1.eq.3}), proves the first announced inequality.
\end{proof}

\section{The distance of a lattice point to a lattice triangle}\label{DLP3.sec.3}

It is shown in \cite{DezaLiuPournin2025} that when $k$ is at least $4$, then
\begin{equation}\label{DLP3.sec.3.eq.1}
\varepsilon_1(3,k)=\frac{1}{\sqrt{2(2k^2-4k+5)(2k^2-2k+1)}}\mbox{.}
\end{equation}

It is also shown that $\varepsilon(3,k)$ is always strictly less than $\varepsilon_0(3,k)$ for all positive $k$ (see \cite[Theorem 1.2]{DezaLiuPournin2025}). It follows in particular that
$$
\varepsilon(3,k)=\varepsilon_1(3,k)<\varepsilon_0(3,k)\mbox{.}
$$

The purpose of this section is to establish an expression similar to (\ref{DLP3.sec.3.eq.1}) but for $\varepsilon_0(3,k)$. Let us first remark that the point $P^\star$ of coordinates $(1,1,1)$ and the triangle $Q^\star$ whose vertices are $(0,0,1)$, $(k,k-1,0)$, and $(0,k,k)$ satisfy
\begin{equation}\label{DLP3.sec.3.eq.2}
d(P^\star,Q^\star)=\frac{1}{\sqrt{3k^4-4k^3+4k^2-2k+1}}
\end{equation}
for every $k$ at least $2$. In particular, $\varepsilon_0(3,k)$ is at most the right-hand side of~(\ref{DLP3.sec.3.eq.2}). We shall show that this upper bound on $\varepsilon_0(3,k)$ is always sharp.

When $k$ is reasonably small, $\varepsilon_0(3,k)$ can be obtained computationally by the exhaustive enumeration procedure from \cite{DezaLiuPournin2024}. This procedure also allows to recover all the lattice points in $[0,k]^3$ and lattice $(3,k)$-triangles, whose distance is equal to $\varepsilon_0(3,k)$. We obtain the following from that procedure.

\begin{prop}\label{DLP3.sec.3,prop.-1}
If $k$ is at least $2$ and at most $8$, then
$$
\varepsilon_0(3,k)=\frac{1}{\sqrt{3k^4-4k^3+4k^2-2k+1}}
$$
and, up to symmetry, this distance is uniquely achieved by $P^\star$ and $Q^\star$.
\end{prop}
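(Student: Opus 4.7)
The upper bound $\varepsilon_0(3,k)\leq d(P^\star,Q^\star)$ matching the right-hand side of the claim is already furnished by the explicit computation in~(\ref{DLP3.sec.3.eq.2}). What remains is the matching lower bound together with the uniqueness assertion, both for the finitely many values $k\in\{2,\dots,8\}$. My plan is to settle this by running the exhaustive enumeration of \cite{DezaLiuPournin2024}, adapted to $\varepsilon_0^u(3,k)$, and then lifting the result from affine hulls to convex hulls.

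First, by Lemma~\ref{DLP3.sec.1.5.lem.1}, the quantity $\varepsilon_0^u(3,k)$ equals the minimum of $\|A(A^tA)^{-1}A^tb-b\|$ over $3\mathord{\times}2$ integer matrices $A$ with entries in $[-k,k]$ and integer vectors $b\in[-k,k]^3$ such that $A^tA$ is non-singular. For each fixed $k$, this is a finite optimization that I would carry out in exact rational arithmetic. The naive search space, of size at most $(2k+1)^9$, is pruned by absorbing one translation (fixing a vertex $q^0$ of the triangle at the origin), quotienting by the $48$ symmetries of the cube $[0,k]^3$, and using the known upper bound $d(P^\star,Q^\star)$ together with Theorem~\ref{DLP3.sec.2.thm.1} to discard on the fly any matrix whose $\det(A^tA)$ is already too large.

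Second, for each $k$ in the range I would record the full list of pairs $(A,b)$ attaining the minimum, reconstruct them as pairs consisting of a lattice point and a lattice triangle, and verify that they form a single orbit under the symmetry group of $[0,k]^3$ with representative $(P^\star,Q^\star)$; this gives the uniqueness statement. To promote the conclusion from $\varepsilon_0^u(3,k)$ to $\varepsilon_0(3,k)$, I would then invoke Corollary~\ref{DLP3.sec.1.5.cor.1} together with the criterion discussed in Remark~\ref{DLP3.sec.1.5.rem.3}: it suffices to check that the orthogonal projection of $P^\star$ onto $\mathrm{aff}(Q^\star)$ falls inside $Q^\star$, which equates $d(P^\star,Q^\star)$ with $d(P^\star,\mathrm{aff}(Q^\star))=\varepsilon_0^u(3,k)$, and together with the trivial inequality $\varepsilon_0^u(3,k)\leq\varepsilon_0(3,k)$ forces the desired equality.

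The main obstacle is practical rather than conceptual: for $k=8$ the naive enumeration already involves on the order of $10^{11}$ configurations, so executing it in exact arithmetic within a reasonable time budget relies crucially on the symmetry reduction and on the pruning against the running upper bound coming from Theorem~\ref{DLP3.sec.2.thm.1}. Conceptually the entire argument rests on the algebraic model of Lemma~\ref{DLP3.sec.1.5.lem.1} and on the combinatorial finiteness of the lattice inside $[0,k]^3$.
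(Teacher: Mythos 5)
Your proposal is correct and follows essentially the same route as the paper, which establishes Proposition~\ref{DLP3.sec.3,prop.-1} purely by the exhaustive enumeration procedure of \cite{DezaLiuPournin2024} applied to the algebraic model (computing $\varepsilon_0^u(3,k)$ exactly and recovering all minimizing pairs up to the symmetries of $[0,k]^3$). The passage from $\varepsilon_0^u(3,k)$ to $\varepsilon_0(3,k)$ and the uniqueness claim are handled exactly as you describe, via Corollary~\ref{DLP3.sec.1.5.cor.1} and the criterion of Remark~\ref{DLP3.sec.1.5.rem.3}.
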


Note that the exhaustive enumeration procedure that provides Proposition~\ref{DLP3.sec.3,prop.-1} also shows that  $\varepsilon_0(3,k)$ coincides with $\varepsilon_0^u(3,k)$ when $2\leq{k}\leq8$.

We will extend the statement of Proposition \ref{DLP3.sec.3,prop.-1} to every integer $k$ greater than $8$. Consider a lattice point $P$ contained in the cube $[0,k]^3$ and a lattice $(3,k)$-triangle $Q$ such that $P$ is not contained in the affine hull of $Q$ (so that $P$ and $Q$ are indeed opposite faces of a lattice tetrahedron). Denote by $q^0$, $q^1$, and $q^2$ the vertices of $Q$. Following the algebraic model from \cite{DezaLiuPournin2024} that we recalled at the end of Section \ref{DLP3.sec.1.5}, we will denote by $A$ the $3\mathord{\times}2$ matrix whose first column is $q^1-q^0$ and whose second column is $q^2-q^0$ and by $b$ the vector $q^0-P$. As explained in Section \ref{DLP3.sec.1.5}, $A^tA$ is non-singular and
\begin{equation}\label{DLP3.sec.3.eq.3}
d\bigl(\mathrm{aff}(P),\mathrm{aff}(Q)\bigr)=\bigl\|A(A^tA)^{-1}A^tb-b\bigr\|\mbox{.}
\end{equation}


Now consider the lattice point $x$ from the hypercube $[-k,k]^9$ whose coordinates can be obtained from the entries of $A$ and $b$ by identification as
\begin{equation}\label{DLP3.sec.3.eq.4}
A=\left[
\begin{array}{cc}
x_1 & x_4\\
x_2 & x_5\\
x_3 & x_6\\
\end{array}
\right]
\end{equation}
and
\begin{equation}\label{DLP3.sec.3.eq.5}
b=\left[
\begin{array}{cc}
x_7\\
x_8\\
x_9\\
\end{array}
\right]\!\!\mbox{.}
\end{equation}

The determinant of $A^tA$ is then the function $g$ of $x$ defined by 
\begin{equation}\label{DLP3.sec.3.eq.7}
g(x)=(x_1x_5-x_2x_4)^2+(x_1x_6-x_3x_4)^2+(x_2x_6-x_3x_5)^2\mbox{.}
\end{equation}

This determinant is non-zero and a sum of squares and therefore $g(x)$ is always positive. With this notation, the right-hand side of (\ref{DLP3.sec.3.eq.3}) can be expressed as
\begin{equation}\label{DLP3.sec.3.eq.6}
\|A(A^tA)^{-1}A^tb-b\|=\frac{|f(x)|}{\sqrt{g(x)}}
\end{equation}
where $f$ is the function of $x$ defined by
\begin{equation}\label{DLP3.sec.3.eq.8}
f(x)=(x_1x_5-x_2x_4)x_9-(x_1x_6-x_3x_4)x_8+(x_2x_6-x_3x_5)x_7\mbox{.}
\end{equation}

It should be noted that up to six different points $x$ may be obtained from $P$ and $Q$ by permuting $q^0$, $q^1$, and $q^2$. We shall denote by $\mathcal{X}_{P,Q}(k)$ all the lattice points $x$ in $[-k,k]^9$ that can be obtained from $P$ and $Q$ as we have just described by possibly permuting $q^0$, $q^1$, and $q^2$ or by applying to both $P$ and $Q$ an isometry of $[0,k]^3$. Note that these isometries send lattice $(3,k)$-polytopes to lattice $(3,k)$-polytopes as they consist in permuting the coordinates of $\mathbb{R}^3$ and perform symmetries with respect to the planes of the form
$$
\{x\in\mathbb{R}^3:x_i=k/2\}
$$
where $i$ is equal to either $1$, $2$, or $3$. Our strategy consists in showing that, when $k$ is at least $9$ and the distance between the affine hulls of $P$ and $Q$ is $\varepsilon_0^u(3,k)$, some point $x$ in $\mathcal{X}_{P,Q}(k)$ can be recovered from a finite set $\mathcal{A}$ of lattice points independent from $k$. More precisely, $\mathcal{A}$ will be the set of the lattice points $a$ contained in $[0,6]^2\mathord{\times}[-4,6]\mathord{\times}[0,6]^3$ such that $a_6$ is at least $-a_3$.

In order to relate $\mathcal{A}$ with $\mathcal{X}_{P,Q}(k)$ we define the affine map $\phi_k:\mathbb{R}^6\rightarrow\mathbb{R}^6$ such that the $i$th coordinate of $\phi_k(a)$ is given by
$$
\bigl[\phi_k(a)\bigr]_i=\left\{
\begin{array}{l}
a_i\mbox{ if }i\mbox{ is equal to }3\mbox{ or }4\mbox{,}\\
k-a_i\mbox{ otherwise.}\\
\end{array}
\right.
$$

From now on, we identify $\mathbb{R}^6$ with the subspace of $\mathbb{R}^9$ spanned by the first six coordinates. Section~\ref{DLP3.sec.3.5} is dedicated to establish the following theorem that provides the announced reduction from $\mathcal{X}_{P,Q}(k)$ to $\mathcal{A}$.

\begin{thm}\label{DLP3.sec.3.thm.1}
Consider a lattice point $P$ contained in the cube $[0,k]^3$ and a lattice $(3,k)$-triangle $Q$. If $k$ is at least $9$ and the distance between $P$ and the affine hull of $Q$ is equal to $\varepsilon_0^u(3,k)$, then there exists a point in $\mathcal{X}_{P,Q}(k)$ whose othogonal projection on $\mathbb{R}^6$ is contained in $\phi_k(\mathcal{A})$.
\end{thm}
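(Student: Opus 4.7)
The plan is to exploit the explicit upper bound $\varepsilon_0^u(3,k) \leq 1/\sqrt{3k^4 - 4k^3 + 4k^2 - 2k + 1}$ coming from the pair $(P^\star, Q^\star)$ in (\ref{DLP3.sec.3.eq.2}), together with the representation (\ref{DLP3.sec.3.eq.6}), in order to constrain the entries of $A$ and $b$. Since $P$ does not lie in the affine hull of $Q$, the integer $f(x)$ is nonzero, so $|f(x)| \geq 1$, and the hypothesis $d(P, \mathrm{aff}(Q)) = \varepsilon_0^u(3,k)$ yields
\[
g(x) \geq 3k^4 - 4k^3 + 4k^2 - 2k + 1.
\]
By (\ref{DLP3.sec.3.eq.7}), $g(x)$ is the sum of three squared $2\mathord{\times}2$ minors of $A$; after permuting the coordinates of $\mathbb{R}^3$ (an isometry of $[0, k]^3$ under which $\mathcal{X}_{P, Q}(k)$ is stable), I may assume that $(x_1 x_5 - x_2 x_4)^2$ is the largest term and hence of order $k^4$.

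A $2\mathord{\times}2$ minor of order $k^2$ forces its four entries to lie near the boundary values $\{0, \pm k\}$. Indeed, $|x_1 x_5 - x_2 x_4| \leq |x_1 x_5| + |x_2 x_4| \leq 2k^2$, with equality only when each of $x_1, x_2, x_4, x_5$ lies in $\{\pm k\}$ and the signs arrange so that $x_1 x_5$ and $x_2 x_4$ have opposite signs. A quantitative refinement obtained by writing each $x_i$ as a boundary value plus a deviation $\delta_i$ and expanding shows that the deficit $2k^2 - |x_1 x_5 - x_2 x_4|$ controls $|\delta_i|$. Using the reflections $q_i \mapsto k - q_i$, which negate row $i$ of $A$ and component $i$ of $b$, together with the column swap given by exchanging $q^1$ and $q^2$, I would normalise so that $x_1, x_2, x_5$ lie close to $+k$ and $x_4$ close to $0$, placing $x_1, x_2, x_5$ in $[k - 6, k]$ and $x_4$ in $[0, 6]$.

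Combining this with the constraints from the other two minors $x_1 x_6 - x_3 x_4$ and $x_2 x_6 - x_3 x_5$, which together with the first must keep $g(x) \geq 3k^4 - 4k^3 + 4k^2 - 2k + 1$, pins down $x_6 \in [k - 6, k]$ and $x_3 \in [-4, 6]$; the asymmetry between the ranges of $x_3$ and $x_4$ reflects that the column swap between $q^1$ and $q^2$ has already been committed in the previous step in order to enforce $x_4 \geq 0$. The condition $a_6 \geq -a_3$ translates under $\phi_k$ to $q^2_3 - q^1_3 \leq k$, which holds automatically since both coordinates lie in $[0, k]$; it is therefore a non-restrictive bookkeeping inequality that simply removes a duplicate case from $\mathcal{A}$.

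The main obstacle is the quantitative passage from the single inequality $g(x) \geq 3k^4 - 4k^3 + 4k^2 - 2k + 1$ to the sharp pointwise bounds $x_i \in [k-6, k]$, $[0, 6]$, or $[-4, 6]$. This requires not only the leading $k^4$ term but also the lower-order corrections, and a careful simultaneous analysis of all three minors and the remaining components of $b$, since the entries of $b$ enter $f(x)$ multiplicatively and can absorb extra magnitude budget. The hypothesis $k \geq 9$ is presumably precisely what makes $[0, 6]$ and $[k - 6, k]$ disjoint while still being compatible with the additive error terms produced by the quantitative analysis.
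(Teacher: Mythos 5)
Your high-level strategy matches the paper's: lower-bound $g(x)$ by $3k^4-4k^3+4k^2-2k+1$ using the extremal pair, deduce that the minors of $A$ are near-maximal, pin the coordinates of a suitably normalised $x\in\mathcal{X}_{P,Q}(k)$ near $0$ or $k$, and observe that the condition $a_6\geq-a_3$ is just the box constraint $q^2_3-q^1_3\leq k$. However, there is a genuine gap at the quantitative core. You bound the minor by $|x_1x_5-x_2x_4|\leq|x_1x_5|+|x_2x_4|\leq2k^2$, but the argument needs the much sharper bound $(x_1x_5-x_2x_4)^2\leq k^4$ (Proposition~\ref{DLP3.sec.3.prop.-0.5}, a geometric fact coming from the embedding in $[0,k]^3$, not from the entries of $A$ lying in $[-k,k]$). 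Without it, two things break. First, the inequality $g(x)\geq3k^4-\cdots$ does not force \emph{each} squared minor to be at least $r(k)=k^4-4k^3+4k^2-2k+1$ (with your bound, a single squared minor could be as large as $4k^4$ and carry the entire sum while the other two vanish), so your step ``the other two minors together with the first must keep $g(x)\geq\cdots$'' has no force. Second, your own equality analysis of the $2k^2$ bound points to $x_1x_5\approx k^2$ and $x_2x_4\approx-k^2$, i.e.\ $x_4$ near $\pm k$, which contradicts the conclusion $x_4\in[0,6]$ you need. The correct chain is: each squared minor lies in $[r(k),k^4]$; after the sign/ordering normalisation of Lemma~\ref{DLP3.sec.3.lem.2} (all of $x_1,x_2,x_4,x_5,x_6\geq0$, $x_1x_5\geq x_2x_4$, $x_4\leq x_2$), one gets $x_1x_5\geq\sqrt{r(k)}>k(k-3)$ hence $x_1,x_5\geq k-2$, and $k^2-x_2x_4\geq\sqrt{r(k)}\geq k^2-2k-1$ hence $x_2x_4\leq2k+1$, and so on.

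The second, smaller gap is that you defer the entire ``quantitative refinement'' converting the deficit into the explicit bounds $[k-6,k]$, $[0,6]$, $[-4,6]$. This is exactly where the paper does its work: the explicit polynomial inequalities of Proposition~\ref{DLP3.sec.3.prop.-10} (e.g.\ $r(k)>k^2(k-3)^2$ and $r(k)\geq\bigl(k^2-4(k-\sqrt{2k+1})\bigr)^2$ for $k\geq9$), a case split on the sign of $x_3$, a further sub-case in the non-negative case resolved by swapping the second and third coordinates of $\mathbb{R}^3$, and---crucially for the negative-$x_3$ case---the constraint $|x_3-x_6|\leq k$ of Proposition~\ref{DLP3.sec.3.prop.1}, which you correctly identify at the end but never feed back into the minor estimates where it is actually needed to bound $x_6$ from above by $k+x_3$ and hence force $x_3\geq-4$. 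As written, the proposal identifies the destination but does not supply the estimates that get there.
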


In the remainder of the section, we explain how Theorem \ref{DLP3.sec.3.thm.1} allows to compute $\varepsilon_0(3,k)$ when $k$ is at least $9$. Let us first state two propositions. The first one is established in the proof of \cite[Lemma 3.2]{DezaLiuPournin2025}.

\begin{prop}\label{DLP3.sec.3.prop.-0.5}
Consider a lattice point $P$ contained in the cube $[0,k]^3$ and a lattice $(3,k)$-triangle $Q$. If the affine hull of $Q$ does not contain $P$, then for every point $x$ in $\mathcal{X}_{P,Q}(k)$, the three squares $(x_1x_5-x_2x_4)^2$, $(x_1x_6-x_3x_4)^2$, and $(x_2x_6-x_3x_5)^2$ are each less than or equal to $k^4$.
\end{prop}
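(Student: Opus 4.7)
The plan is to identify the three signed quantities $x_1x_5-x_2x_4$, $x_1x_6-x_3x_4$, and $x_2x_6-x_3x_5$ as the three $2\mathord{\times}2$ minors of the matrix $A$ from (\ref{DLP3.sec.3.eq.4}), and to interpret each of them geometrically. By the construction that precedes the statement, the columns of $A$ are the edge vectors $q^1-q^0$ and $q^2-q^0$ of the triangle $Q$. Consequently, the minor built from rows $i$ and $j$ of $A$ equals the determinant $\det\bigl(\pi_{ij}(q^1-q^0),\pi_{ij}(q^2-q^0)\bigr)$, where $\pi_{ij}$ denotes the orthogonal projection of $\mathbb{R}^3$ onto the coordinate plane spanned by the $i$th and $j$th axes. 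Up to sign, this determinant is twice the area of the projected triangle $\pi_{ij}(Q)$.

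The next step is to use the fact that $Q\subseteq[0,k]^3$, which forces the projected triangle $\pi_{ij}(Q)$ to lie in a translated copy of $[0,k]^2$. Since any triangle inscribed in a square of side length $k$ has area at most $k^2/2$, each of the three minors has absolute value at most $k^2$, and squaring yields the desired $k^4$ upper bound.

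Finally, I would verify that the bound transfers to every element of $\mathcal{X}_{P,Q}(k)$, not merely to the canonical point attached to a fixed labelling of $P$ and $Q$. A permutation of the vertices $q^0,q^1,q^2$ amounts to a signed permutation of the columns of $A$, which leaves the absolute values of the $2\mathord{\times}2$ minors unchanged. An isometry of $[0,k]^3$ applied simultaneously to $P$ and $Q$ replaces $Q$ by another lattice $(3,k)$-triangle $Q'$ still contained in $[0,k]^3$, so the projection argument applies verbatim to the new data. I do not expect any substantive obstacle: the proposition reduces to the classical interpretation of $2\mathord{\times}2$ minors of a $3\mathord{\times}2$ real matrix as twice the signed areas of the coordinate-plane projections of the spanned parallelogram, together with the elementary bound on the area of a triangle inscribed in a square; the only care needed is to track that the symmetries built into $\mathcal{X}_{P,Q}(k)$ preserve the hypothesis $Q\subseteq[0,k]^3$.
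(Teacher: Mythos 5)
Your proof is correct. Note that the paper does not actually prove this proposition in-line: it defers entirely to the proof of \cite[Lemma 3.2]{DezaLiuPournin2025}. Your self-contained argument---each of the three quantities is a $2\mathord{\times}2$ minor of $A$, hence twice the signed area of the projection of $Q$ onto a coordinate plane, and a triangle contained in a square of side $k$ has area at most $k^2/2$---is sound and is the standard route for this bound, so I would count it as essentially the same approach. One minor imprecision: relabelling which vertex plays the role of $q^0$ is not a signed permutation of the columns of $A$ but a right-multiplication by a unimodular $2\mathord{\times}2$ matrix of determinant $\pm1$; this still only changes the sign of each minor, and in any case your direct reading of the minors as projected areas of the unchanged triangle $Q$ already covers that case.
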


The second proposition follows from (\ref{DLP3.sec.3.eq.3}), (\ref{DLP3.sec.3.eq.6}), and Proposition~\ref{DLP3.sec.3.prop.-0.5}.

\begin{prop}\label{DLP3.sec.3.prop.0}
Consider a lattice point $P$ contained in the cube $[0,k]^3$ and a lattice $(3,k)$-triangle $Q$. If the distance between $P$ and the affine hull of $Q$ is equal to $\varepsilon_0^u(3,k)$, then for every point $x$ in $\mathcal{X}_{P,Q}(k)$, 
\begin{enumerate}
\item[(i)] $f(x)$ is equal to $1$ or to $-1$ and
\item[(ii)] $g(x)$ is equal to $1/\varepsilon_0^u(3,k)^2$.
\end{enumerate}
\end{prop}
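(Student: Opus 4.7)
The plan is to combine formulas (\ref{DLP3.sec.3.eq.3}) and (\ref{DLP3.sec.3.eq.6}) to express $\varepsilon_0^u(3,k) = |f(x)|/\sqrt{g(x)}$ for every $x$ in $\mathcal{X}_{P,Q}(k)$, reducing the proposition to the single claim that the integer $f(x)$ must satisfy $|f(x)|=1$. Once (i) is known, statement (ii) follows at once by rearranging the squared identity $g(x)\,\varepsilon_0^u(3,k)^2 = f(x)^2$.

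First, I would check that $f(x)$ is a nonzero integer. By the expression (\ref{DLP3.sec.3.eq.8}), $f(x)$ is a polynomial with integer coefficients in the integer entries $x_1,\ldots,x_9$, so it is an integer. Moreover, expanding along the appropriate column shows that $f(x)$ is, up to sign, the determinant of the $3\mathord{\times}3$ matrix $[A \,|\, b]$, which vanishes precisely when $b$ lies in the column span of $A$, that is, precisely when $P$ belongs to $\mathrm{aff}(Q)$. Since $\varepsilon_0^u(3,k)$ is positive, $P$ cannot lie in $\mathrm{aff}(Q)$, and therefore $f(x)$ is nonzero.

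The main step will be to sandwich $f(x)^2$ between two explicit bounds. On one side, Proposition \ref{DLP3.sec.3.prop.-0.5} bounds each of the three squared terms in the expression (\ref{DLP3.sec.3.eq.7}) for $g(x)$ by $k^4$, so $g(x) \leq 3k^4$. On the other side, the pair $(P^\star, Q^\star)$ appearing before (\ref{DLP3.sec.3.eq.2}) witnesses the upper bound $\varepsilon_0^u(3,k)^2 \leq 1/(3k^4 - 4k^3 + 4k^2 - 2k + 1)$ for $k \geq 2$. Multiplying these two estimates yields
$$
f(x)^2 = g(x)\,\varepsilon_0^u(3,k)^2 \leq \frac{3k^4}{3k^4 - 4k^3 + 4k^2 - 2k + 1}\mbox{.}
$$

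Finally, a short algebraic check reduces the inequality $f(x)^2 < 4$ to $9k^4 - 16k^3 + 16k^2 - 8k + 4 > 0$, which is immediate for $k \geq 2$ since $k^3(9k-16) \geq 16$ already dominates the remaining terms. As $f(x)$ is a nonzero integer, this forces $|f(x)| = 1$, establishing (i); statement (ii) then follows by substituting $f(x)^2 = 1$ back into the key identity. No step presents a substantial obstacle, the only care required being the short polynomial inequality, which is purely routine.
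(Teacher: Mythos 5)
Your proposal is correct and follows essentially the same route as the paper: both rest on the identity $\varepsilon_0^u(3,k)=|f(x)|/\sqrt{g(x)}$ from (\ref{DLP3.sec.3.eq.3}) and (\ref{DLP3.sec.3.eq.6}), the integrality and nonvanishing of $f(x)$, the bound $g(x)\leq 3k^4$ from Proposition \ref{DLP3.sec.3.prop.-0.5}, and the witness upper bound (\ref{DLP3.sec.3.eq.2}) to force $f(x)^2<4$. The only cosmetic difference is that you argue directly while the paper phrases the last step as a contradiction from $|f(x)|\geq 2$.
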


\begin{proof}
Assume that the distance between $P$ and the affine hull of $Q$ is equal to $\varepsilon_0^u(3,k)$. It follows from (\ref{DLP3.sec.3.eq.3}) and (\ref{DLP3.sec.3.eq.6}) that
\begin{equation}\label{DLP3.sec.3.lem.1.eq.-3}
\varepsilon_0^u(3,k)=\frac{|f(x)|}{\sqrt{g(x)}}\mbox{.}
\end{equation}

Since $f(x)$ is an integer and $\varepsilon_0^u(3,k)$ is non-zero, the absolute value of $f(x)$ is therefore at least $1$. There remains to show that this absolute value is at most~$1$. Not only will this show that $f(x)$ is equal to $1$ or $-1$ but also that $g(x)$ is the squared inverse of $\varepsilon_0^u(3,k)$ because of (\ref{DLP3.sec.3.lem.1.eq.-3}).

Assume for contradiction that the absolute value of $f(x)$ is greater than~$1$. In that case, (\ref{DLP3.sec.3.lem.1.eq.-3}) and the integrality of $f(x)$ imply that
$$
\varepsilon_0^u(3,k)\geq\frac{2}{\sqrt{g(x)}}\mbox{.}
$$

However, it follows from (\ref{DLP3.sec.3.eq.7}) and Proposition~\ref{DLP3.sec.3.prop.-0.5} that $g(x)$ is at most $3k^4$ and we get that $\varepsilon_0^u(3,k)$, and therefore $\varepsilon_0(d,k)$, is at least $2/(\sqrt{3}k^2)$ which contradicts the fact that $\varepsilon_0(3,k)$ is at most the right-hand side of (\ref{DLP3.sec.3.eq.2}).
\end{proof}

Let $\tilde{f}:\mathbb{Z}^9\mathord{\setminus}\{0\}\rightarrow\mathbb{N}$ be the map such that $\tilde{f}(x)$ is the greatest common divisor of $x_1x_5-x_2x_4$, $x_1x_6-x_3x_4$, and $x_2x_6-x_3x_5$. 
%
According to (\ref{DLP3.sec.3.eq.8}), if $|f(x)|$ is equal to $1$, these three quantities must be relatively prime and, therefore, $\tilde{f}(x)$ is also necessarily equal to $1$. In other words, Proposition~\ref{DLP3.sec.3.prop.0} still holds when $f$ is replaced by $\tilde{f}$ in its statement. Now, a crucial observation is that $\tilde{f}(x)$ and $g(x)$ only depend on the first six coordinates of $x$. In particular, combining Theorem~\ref{DLP3.sec.3.thm.1} and Proposition~\ref{DLP3.sec.3.prop.0} allows to compute $\varepsilon_0(3,k)$ by studying the finitely-many functions $k\rightarrow\tilde{f}\circ\phi_k(a)$ and $k\rightarrow{g\circ\phi_k(a)}$ when $a$ ranges over $\mathcal{A}$. Let us explain how that can be done using symbolic computation. First observe that $\tilde{f}\circ\phi_k(a)$ is the absolute value of the greatest common divisor of 
$$
\left\{
\begin{array}{l}
k^2-(a_1+a_4+a_5)k+a_1a_5+a_2a_4\mbox{,}\\
k^2-(a_1+a_6)k+a_1a_6-a_3a_4\mbox{,}\\
k^2-(a_2+a_3+a_6)k+a_2a_6+a_3a_5\mbox{.}\\
\end{array}
\right.
$$

For any given point $a$ in $\mathcal{A}$, one can check using symbolic computation that, when the greatest common divisor of these three quadratic polynomials is a linear or quadratic function of $k$, its absolute value is never equal to $1$ when $k$ is at least $9$. This just amounts to solve two linear or quadratic equations which is easily done using symbolic computation. Hence, it follows from Theorem~\ref{DLP3.sec.3.thm.1} and Proposition \ref{DLP3.sec.3.prop.0} that $\varepsilon_0^u(3,k)$ is the inverse of $\sqrt{g\circ\phi_k(a)}$ for some point $a$ in $\mathcal{A}$ such that  are such that the greatest common divisor of the above three quadratic polynomials has degree $0$. Now consider the set of all the different polynomials $k\rightarrow{g\circ\phi_k(a)}$ obtained when $a$ ranges over that particular subset of $\mathcal{A}$. Symbolic computation shows that the largest real root of the difference between any two such polynomials is less than $9$. One of these polynomials is therefore greater than all the others when $k$ ranges over $[9,+\infty[$. This polynomial turns out to be $3k^4-4k^3+4k^2-2k+1$ and it is obtained for just the four points $(0,0,-1,1,0,1)$, $(0,0,1,0,1,0)$, $(0,1,-1,0,0,1)$, and $(0,1,0,1,0,0)$ from $\mathcal{A}$. 
Using these computational results, we now prove Theorem \ref{DLP3.sec.1.thm.3}.

\begin{proof}[Proof of Theorem \ref{DLP3.sec.1.thm.3}]
When $k$ is at least $2$ and at most $8$, the theorem follows from Proposition \ref{DLP3.sec.3,prop.-1} and we will therefore assume from now on that $k$ is at least $9$. First observe that according to (\ref{DLP3.sec.3.eq.2}),
\begin{equation}\label{DLP3.sec.1.thm.1.eq.0}
\varepsilon_0(3,k)\leq\frac{1}{\sqrt{3k^4-4k^3+4k^2-2k+1}}\mbox{.}
\end{equation}

We shall prove the reverse inequality. Consider a lattice point $P$ contained in the cube $[0,k]^3$ and a lattice $(3,k)$-triangle $Q$ such that the distance between $P$ and the affine hull of $Q$ is equal to $\varepsilon_0^u(3,k)$. According to Theorem~\ref{DLP3.sec.3.thm.1}, there exists a point $x$ in $\mathcal{X}_{P,Q}(k)$ and a point $a$ in $\mathcal{A}$ such that $\phi_k(a)$ is the orthogonal projection of $x$ on $\mathbb{R}^6$. By Proposition~\ref{DLP3.sec.3.prop.0}, the absolute value of $f(x)$, and therefore $\tilde{f}\circ\phi_k(a)$, is equal to $1$. Moreover, $g(x)$, and therefore $g\circ\phi_k(a)$, is the inverse of $\varepsilon_0^u(3,k)^2$. As $\varepsilon_0^u(3,k)$ is at most $\varepsilon_0(3,k)$, (\ref{DLP3.sec.1.thm.1.eq.0}) implies
\begin{equation}\label{DLP3.sec.1.thm.1.eq.1}
g\circ\phi_k(a)\geq3k^4-4k^3+4k^2-2k+1\mbox{.}
\end{equation}

However, as $\tilde{f}\circ\phi_k(a)$ is equal to $1$ the above computational results imply that (\ref{DLP3.sec.1.thm.1.eq.1}) is an equality and, as a consequence,
$$
\varepsilon_0(3,k)\geq\varepsilon_0^u(3,k)=\frac{1}{\sqrt{3k^4-4k^3+4k^2-2k+1}}
$$
which combined with (\ref{DLP3.sec.1.thm.1.eq.0}) provides the desired expression for $\varepsilon_0(3,k)$ and shows that $\varepsilon_0^u(3,k)$ coincides with $\varepsilon_0(3,k)$. In addition, $a$ is one of the four points $(0,0,-1,1,0,1)$, $(0,0,1,0,1,0)$, $(0,1,-1,0,0,1)$, and $(0,1,0,1,0,0)$. The image of each of these points by $\phi_k$ provides the first six coordinates of a lattice point in the hypercube $[-k,k]^9$ from which a lattice triangle $Q$ can be reconstructed. All the triangles that can be reconstructed from these points coincide, up to the symmetries of $[0,k]^3$ to the triangle $Q^\star$, which proves that $Q$ coincides with $Q^\star$, up to symmetry. There remains to show that $P^\star$ is the only lattice point in the cube $[0,k]^3$ whose distance with $Q^\star$ is equal to $\varepsilon_0(3,k)$.

Assume for contradiction that $P$ is not equal to $P^\star$ but has the same distance to $Q^\star$ than $P^\star$ and observe that by Corollary \ref{DLP3.sec.1.5.cor.1},
$$
d(P,Q^\star)=d\bigl(\mathrm{aff}(P),\mathrm{aff}(Q^\star)\bigr)\mbox{.}
$$

As a consequence, the projection of both $P$ and $P^\star$ on the affine hull of $Q^\star$ must belong to $Q^\star$. In turn, this means that one of the points obtained by adding or subtracting $P^\star-P$ to each vertex of $Q^\star$ is a lattice point $q$ contained in $Q^\star$. Observe that $q$ cannot be a vertex of $Q^\star$. Indeed otherwise, the line segment with extremities $P$ and $P^\star$ would be a translate of an edge of $Q^\star$. In that case, the orthogonal projections of $P$ and $P^\star$ on the affine hull of $Q^\star$ are necessarily two vertices of $Q^\star$ because otherwise, one of them would be outside of $Q^\star$. This would imply that $\varepsilon_0(3,k)$ is the distance between two lattice points and therefore at least $1$. Hence, $q$ is a lattice point contained in $Q^\star$ and distinct from its three vertices. But in that case, $Q^\star$ contains a strictly smaller lattice $(3,k)$\nobreakdash-triangle whose distance to $P$ or $P^\star$ is $\varepsilon_0(3,k)$, which is impossible because any such triangle must coincide with $Q^\star$ up to symmetry.
\end{proof}

\section{The proof of Theorem \ref{DLP3.sec.3.thm.1}}\label{DLP3.sec.3.5}

In order to prove Theorem \ref{DLP3.sec.3.thm.1}, we first need to single out a representative point in $\mathcal{X}_{P,Q}(k)$ where $P$ is a lattice point in $[0,k]^3$ and $Q$ a lattice $(3,k)$\nobreakdash-triangle whose affine hull does not contain $P$. This can be done as follows.

\begin{lem}\label{DLP3.sec.3.lem.2}
For any lattice point $P$ in $[0,k]^3$ and any lattice $(3,k)$-triangle $Q$ such that $P$ and $Q$ are disjoint, there exists a point $x$ in $\mathcal{X}_{P,Q}(k)$ such that
\begin{enumerate}
\item[(i)] $x_1$, $x_2$, and $x_4$ to $x_6$ are non-negative,
\item[(ii)] $x_1x_5$ is at least $x_2x_4$, and
\item[(iii)] $x_4$ is at most $x_2$.
\end{enumerate}
\end{lem}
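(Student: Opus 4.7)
The plan is to exhibit a sequence of symmetry operations that brings the pair $(P, Q)$ into a configuration simultaneously satisfying (i), (ii), and (iii). Recall that $\mathcal{X}_{P,Q}(k)$ is stable under (a) permutations of the three vertices $q^0, q^1, q^2$ of $Q$, (b) permutations of the coordinates of $\mathbb{R}^3$, and (c) the reflections $y_i \mapsto k - y_i$ in each coordinate. At the level of the matrix $A$, a coordinate permutation permutes rows of $A$, the reflection $y_i \mapsto k - y_i$ negates the $i$-th row, and the transposition $q^1 \leftrightarrow q^2$ swaps the two columns of $A$ while keeping $b$ unchanged.

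First I would choose which vertex of $Q$ plays the role of $q^0$. For each $i \in \{1, 2\}$, let $S_i \subseteq \{q^0, q^1, q^2\}$ be the set of vertices attaining the minimum or maximum value of the $i$-th coordinate among the three. Each $S_i$ has at least two elements, so $|S_1 \cap S_2| \geq |S_1| + |S_2| - 3 \geq 1$, guaranteeing a common extremal vertex; relabel this vertex as $q^0$. After applying the reflections in coordinates $1$ and $2$ if necessary, we may assume that $q^0$ is coordinate-wise no larger than $q^1$ and $q^2$ in coordinates $1$ and $2$, which yields $x_1, x_2, x_4, x_5 \geq 0$.

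Now set $\alpha_i = q^1_i - q^0_i \geq 0$ and $\beta_i = q^2_i - q^0_i \geq 0$ for $i = 1, 2$. The two remaining freedoms are whether to swap $q^1$ and $q^2$ (which swaps the columns of $A$) and whether to permute coordinates $1$ and $2$ (which swaps the first two rows). A direct check of the four resulting configurations shows that condition (ii), namely $x_1 x_5 \geq x_2 x_4$, becomes $\alpha_1 \beta_2 \geq \alpha_2 \beta_1$ in the base configuration and in the one obtained by performing both swaps, and becomes the reverse inequality in the other two. Since one of these two inequalities always holds, restrict attention to the pair of configurations achieving the correct sign for (ii). Within that pair, the two instances of condition (iii) turn out to be opposite inequalities between two of the $\alpha$'s and $\beta$'s (namely $\beta_1 \leq \alpha_2$ versus $\alpha_2 \leq \beta_1$, or in the other pair $\alpha_1 \leq \beta_2$ versus $\beta_2 \leq \alpha_1$), so at least one configuration in the pair also satisfies (iii).

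Finally, apply the reflection $y_3 \mapsto k - y_3$ if and only if $q^0_3 > q^2_3$; this enforces $x_6 \geq 0$, only negates the third row of $A$, and therefore leaves $x_1, x_2, x_4, x_5$ and the statements of (ii) and (iii) unaffected. The point obtained belongs to $\mathcal{X}_{P,Q}(k)$ and satisfies all three conditions. The only delicate step is the case analysis for how (ii) and (iii) transform under the column swap and the coordinate-$1$-$2$ swap, but this reduces to a short verification over four configurations.
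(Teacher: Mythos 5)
Your proof is correct and follows essentially the same route as the paper's: normalize by the symmetries of $[0,k]^3$ and the relabelling of the vertices of $Q$, first securing the signs of $x_1,x_2,x_4,x_5$, then (ii) and (iii) via the column swap and the combined column-plus-coordinate swap (which preserves $x_1x_5-x_2x_4$ while flipping $x_2-x_4$), and finally $x_6\geq 0$ by reflecting the third coordinate. The only difference is cosmetic: you locate the vertex $q^0$ that is extremal in both of the first two coordinates by the pigeonhole count $|S_1\cap S_2|\geq |S_1|+|S_2|-3\geq 1$, whereas the paper argues via the smallest axis-parallel rectangle containing the projection of $Q$; both arguments are valid.
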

\begin{proof}
Consider a lattice point $P$ contained in $[0,k]^3$ and a lattice $(3,k)$-triangle $Q$ such that $P$ and $Q$ are disjoint. Let us identify $\mathbb{R}^2$ with the plane spanned by the first two coordinates of $\mathbb{R}^3$ and denote by $\pi:\mathbb{R}^3\rightarrow\mathbb{R}^2$ the orthogonal projection on $\mathbb{R}^2$. Note that $\pi(Q)$ is a triangle or a line segment.

Consider the smallest rectangle of the form $[a,b]\mathord{\times}[c,d]$ that contains $\pi(Q)$. Since $\pi(Q)$ has at most three vertices, one of the edges of the rectangle $[a,b]\mathord{\times}[c,d]$ does not contain a vertex of $\pi(Q)$ in its relative interior. That edge must then admit one of the vertices of $\pi(Q)$ as an extremity because otherwise $[a,b]\mathord{\times}[c,d]$ could be made smaller while still containing $\pi(Q)$. Thanks to the symmetries of the cube $[0,k]^3$, one can assume that $(a,c)$ is a vertex of $\pi(Q)$ while preserving the distance between $P$ and $Q$. In that case, there is at least one vertex of $Q$ whose orthogonal projection on $\mathbb{R}^2$ is $(a,c)$ and we shall denote one such vertex of $Q$ by $q^0$. Let $q^1$ and $q^2$ be the two vertices of $Q$ distinct from $q^0$. As $\pi(q^0)$ is equal to $(a,c)$ and both $\pi(q^1)$ and $\pi(q^2)$ are contained in $[a,b]\mathord{\times}[c,d]$, the first two coordinates of both $q^1-q^0$ and $q^2-q^0$ are non-negative.

Now denote by $A$ the $3\mathord{\times}2$ matrix whose first column is $q^1-q^0$ and whose second column is $q^2-q^0$. Further denote by $b$ the vector $q^0-P$ and by $x$ the lattice point in $\mathcal{X}_{P,Q}(k)$ obtained from $A$ and $b$ via (\ref{DLP3.sec.3.eq.4}) and (\ref{DLP3.sec.3.eq.5}). Since the first two coordinates of $q^1-q^0$ and $q^2-q^0$ are non-negative, $x_1$, $x_2$, $x_4$, and $x_5$ are non-negative. Now observe that $x_1x_5$ may be less than $x_2x_4$. However, one can recover this property by exchanging the labels of $q^1$ and $q^2$ which results in a point $x$ that still belongs to $\mathcal{X}_{P,Q}(k)$ by the definition of this set. Note that exchanging these labels does not affect the sign of $x_1$, $x_2$, $x_4$, or $x_5$.

Similarly, one can make $x_4$ at most $x_2$ by exchanging $q^1$ with $q^2$ and by permuting the first two coordinates of $\mathbb{R}^3$. Note that the combination of these two operations do not affect the signs of $x_1$, $x_2$, $x_4$, $x_5$, or $x_1x_5-x_2x_4$ and results in a point $x$ that still belongs to $\mathcal{X}_{P,Q}(k)$.

Finally, if the third coordinate of $q^2-q^0$ (and therefore $x_6$) is negative, we can make it non-negative without changing the sign of $x_1$, $x_2$, $x_4$, $x_5$, $x_1x_5-x_2x_4$, or $x_2-x_4$ by replacing both $P$ and $Q$ with their symmetric with respect to
$$
\{x\in\mathbb{R}^3:x_3=k/2\}\mbox{.}
$$

This results in a point $x$ in $\mathcal{X}_{P,Q}(k)$ with the desired properties.
\end{proof}

Assuming that the distance of $P$ and $Q$ is equal to $\varepsilon_0(3,k)$, we will bound the coordinates of the points $x$ in $\mathcal{X}_{P,Q}(k)$ that satisfy the assertions (i), (ii), and (iii) in the statement of Lemma~\ref{DLP3.sec.3.lem.2}. In order to do that, we will use the following three lower bounds on the polynomial expression
$$
r(k)=k^4-4k^3+4k^2-2k+1
$$
whose straightforward proofs, that consist in solving polynomial inequalities of degree at most $4$ in $k$ or in $\sqrt{2k+1}$, are omitted.

\begin{prop}\label{DLP3.sec.3.prop.-10}
If $k$ is at least $9$, then
\begin{enumerate}
\item[(i)] $r(k)$ is greater than $k^2(k-3)^2$,
\item[(ii)] $r(k)$ is at least $\bigl(k^2-2k-1\bigr)^2$, and
\item[(iii)] $r(k)$ is at least $\bigl(k^2-4(k-\sqrt{2k+1})\bigr)^2$,
\end{enumerate}
\end{prop}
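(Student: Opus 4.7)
The plan is to reduce each of the three inequalities to an elementary polynomial statement in one variable and check the sign on $[9,+\infty)$.

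For (i), expanding $k^2(k-3)^2 = k^4 - 6k^3 + 9k^2$ and subtracting from $r(k)$ yields
$$
r(k) - k^2(k-3)^2 = 2k^3 - 5k^2 - 2k + 1\mbox{.}
$$
I would evaluate this cubic at $k = 9$ (where it equals $1036$) and observe that its derivative $6k^2 - 10k - 2$ is positive for $k \geq 2$, so the cubic is strictly increasing and hence strictly positive on $[9,+\infty)$. For (ii), expanding $(k^2 - 2k - 1)^2 = k^4 - 4k^3 + 2k^2 + 4k + 1$ gives immediately $r(k) - (k^2 - 2k - 1)^2 = 2k^2 - 6k = 2k(k-3)$, which is non-negative for every $k \geq 3$. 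Both of these parts are direct polynomial verifications with no real obstacle.

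The hard part is (iii), since the right-hand side contains the radical $\sqrt{2k+1}$. My plan is to expand
$$
\bigl(k^2 - 4(k - \sqrt{2k+1})\bigr)^2 = (k^2 - 4k)^2 + 8k(k-4)\sqrt{2k+1} + 16(2k+1)
$$
and isolate the cross term, which rewrites the desired inequality as
$$
4k^3 - 12k^2 - 34k - 15 \geq 8k(k-4)\sqrt{2k+1}\mbox{.}
$$
For $k \geq 9$ both sides are positive (at $k = 9$ the left side equals $1623$ and the right side is $360\sqrt{19}$), so squaring preserves the inequality and reduces the problem to a polynomial inequality in $k$. Equivalently, substituting $w = \sqrt{2k+1}$ (so that $k = (w^2-1)/2$ and $w \geq \sqrt{19}$) transforms the statement into
$$
w^6 - 4w^5 - 9w^4 + 40w^3 - 19w^2 - 36w - 3 \geq 0\mbox{.}
$$
I would verify this by evaluating the left-hand side at $w = \sqrt{19}$, where a direct calculation yields $3246 - 720\sqrt{19} > 0$, and then showing that the polynomial is increasing on $[\sqrt{19}, +\infty)$ via a derivative argument iterated once or twice (the second derivative $30w^4 - 80w^3 - 108w^2 + 240w - 38$ is clearly positive well before $w = \sqrt{19}$, forcing monotonicity of the first derivative from that point on, which is itself positive at $w = \sqrt{19}$). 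This chain of derivative estimates is the only tedious step, but it is entirely mechanical.
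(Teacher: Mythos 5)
Your verification is correct and is exactly the kind of elementary polynomial check the paper alludes to (the paper omits the proof, describing it as solving polynomial inequalities in $k$ or in $\sqrt{2k+1}$). All your reductions and values check out --- $r(k)-k^2(k-3)^2=2k^3-5k^2-2k+1$, $r(k)-(k^2-2k-1)^2=2k(k-3)$, and for (iii) the sextic $w^6-4w^5-9w^4+40w^3-19w^2-36w-3$ taking the value $3246-720\sqrt{19}>0$ at $w=\sqrt{19}$ --- so the only remaining work is the routine monotonicity chain you sketch; just note that one further derivative is needed there, since the second derivative is still negative at $w=3$ and you must check it stays positive on all of $[\sqrt{19},+\infty)$, not merely at one point.
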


The announced bounds on the coordinates of $x$ are given depending on the sign of $x_3$. The following lemma treats the case when $x_3$ is non-negative.

\begin{lem}\label{DLP3.sec.3.lem.3}
Assume that $k$ is at least $9$. Consider a lattice point $P$ contained in $[0,k]^3$ and a lattice $(3,k)$-triangle $Q$ such that the distance of $P$ to the affine hull of $Q$ is $\varepsilon_0^u(3,k)$. Further consider a point $x$ in $\mathcal{X}_{P,Q}(k)$ such that
\begin{enumerate}
\item[(i)] $x_1$, $x_2$, and $x_4$ to $x_6$ are non-negative,
\item[(ii)] $x_1x_5$ is at least $x_2x_4$, and
\item[(iii)] $x_4$ is at most $x_2$.
\end{enumerate}

If $x_3$ is non-negative, then $x_1$, $x_5$, and $x_6$ are at least $k-2$ while $x_4$ is at most $3$. Moreover, either $x_2$ is at least $k-2$ and $x_3$ is at most $3$ or inversely, $x_3$ is at least $k-2$ and $x_2$ is at most $3$.
\end{lem}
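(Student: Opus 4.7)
The plan is to rewrite the distance condition as algebraic inequalities on the three signed determinants
\[
\alpha = x_1x_5 - x_2x_4, \quad \beta = x_1x_6 - x_3x_4, \quad \gamma = x_2x_6 - x_3x_5
\]
that appear in $g(x)$, and then to bound each coordinate of $x$ by a case analysis on the signs of $\beta$ and $\gamma$. Combining Proposition~\ref{DLP3.sec.3.prop.0}(ii) with the upper bound~(\ref{DLP3.sec.3.eq.2}) on $\varepsilon_0(3,k)$ gives $g(x) \geq 3k^4 - 4k^3 + 4k^2 - 2k + 1$. Since $g(x) = \alpha^2 + \beta^2 + \gamma^2$ and each of these three squares is at most $k^4$ by Proposition~\ref{DLP3.sec.3.prop.-0.5}, each in fact lies between $r(k)$ and $k^4$. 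Hypothesis (ii) of the lemma forces $\alpha \geq \sqrt{r(k)}$, while $\beta$ and $\gamma$ are a priori only known to satisfy $|\beta| \geq \sqrt{r(k)}$ and $|\gamma| \geq \sqrt{r(k)}$.

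I would next apply Proposition~\ref{DLP3.sec.3.prop.-10}(i) to $\alpha \geq \sqrt{r(k)}$ to obtain $x_1 x_5 > k(k-3)$, which combined with $x_1, x_5 \leq k$ and the integrality of the coordinates forces $x_1 \geq k-2$ and $x_5 \geq k-2$. Using Proposition~\ref{DLP3.sec.3.prop.-10}(iii), the same inequality yields the key product bound
\[
x_2 x_4 \;\leq\; x_1 x_5 - \sqrt{r(k)} \;\leq\; k^2 - \sqrt{r(k)} \;\leq\; 4\bigl(k - \sqrt{2k+1}\bigr).
\]
I would then rule out $\beta \leq -\sqrt{r(k)}$ as follows: this would give $x_3 x_4 \geq \sqrt{r(k)} \geq k^2 - 2k - 1$ via Proposition~\ref{DLP3.sec.3.prop.-10}(ii), and the same integrality trick would force $x_3, x_4 \geq k - 2$; hypothesis (iii) would then give $x_2 \geq x_4 \geq k - 2$, hence $x_2 x_4 \geq (k-2)^2$, which contradicts the product bound for $k \geq 9$. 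Therefore $\beta \geq \sqrt{r(k)}$, and Proposition~\ref{DLP3.sec.3.prop.-10}(ii) together with $x_1 \leq k$ gives $x_6 \geq k - 2$.

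Finally, I would split on the sign of $\gamma$. If $\gamma \geq \sqrt{r(k)}$, the argument parallel to the one used for $\beta$ yields $x_2 \geq k-2$, while $x_3 x_5 \leq k^2 - \sqrt{r(k)} \leq 4(k - \sqrt{2k+1})$ combined with $x_5 \geq k-2$ gives $x_3 \leq 4(k - \sqrt{2k+1})/(k-2)$, which a short calculation shows is strictly less than $4$ for every $k \geq 2$, so $x_3 \leq 3$. The symmetric case $\gamma \leq -\sqrt{r(k)}$ yields $x_3 \geq k-2$ and $x_2 \leq 3$. The bound $x_4 \leq 3$ then follows from the product bound together with $x_2 \geq k-2$ in the first case, and directly from $x_4 \leq x_2 \leq 3$ in the second. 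I expect the main obstacle to be the elimination of the sign case $\beta \leq -\sqrt{r(k)}$: that is the place where both ordering hypotheses (ii) and (iii) of the lemma are crucially used to obtain a contradiction, and the numerical verification that $(k-2)^2 > 4(k-\sqrt{2k+1})$ for $k \geq 9$ is what pins down the range in which the lemma applies.
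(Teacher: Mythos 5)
Your proposal is correct and follows essentially the same route as the paper: lower-bound each of the three squares in $g(x)$ by $r(k)$, then extract the coordinate bounds by integrality and magnitude arguments, with the final case split on the sign of $x_2x_6-x_3x_5$. The only (valid) variations are cosmetic: you rule out $x_1x_6<x_3x_4$ by forcing $x_3,x_4\geq k-2$ and contradicting the product bound on $x_2x_4$ via hypothesis (iii), where the paper instead notes that $x_4\leq\sqrt{2k+1}$ makes $(x_1x_6-x_3x_4)^2$ too small, and you invoke part (iii) of Proposition \ref{DLP3.sec.3.prop.-10} to get $x_2x_4\leq 4(k-\sqrt{2k+1})$ where the paper uses part (ii) to get $x_2x_4\leq 2k+1$.
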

\begin{proof}
Under the assumption that the distance between $P$ and the affine hull of $Q$ is equal to $\varepsilon_0^u(3,k)$, we obtain from Proposition~\ref{DLP3.sec.3.prop.0} that
$$
\varepsilon_0^u(3,k)=\frac{1}{\sqrt{g(x)}}
$$
and, in turn, by (\ref{DLP3.sec.3.eq.2}) that
\begin{equation}\label{DLP3.sec.3.lem.3.eq.0}
g(x)\geq3k^4-4k^3+4k^2-2k+1\mbox{.}
\end{equation}

It then follows from (\ref{DLP3.sec.3.eq.7}) and Proposition \ref{DLP3.sec.3.prop.-0.5} that
\begin{equation}\label{DLP3.sec.3.lem.3.eq.1}
(x_1x_5-x_2x_4)^2\geq{r(k)}\mbox{.}
\end{equation}

Since all the $x_i$ in the left-hand side of (\ref{DLP3.sec.3.lem.3.eq.1}) are non-negative and $x_1x_5$ is at least $x_2x_4$, it follows that $x_1^2x_5^2$ is at least $r(k)$. Now recall that the absolute value of $x_1$ and $x_5$ is at most $k$. As, according to Proposition \ref{DLP3.sec.3.prop.-10}, $r(k)$ is greater than $k^2(k-3)^2$, it follows that $x_1$ and $x_5$ are both at least $k-2$.

Likewise, it follows from (\ref{DLP3.sec.3.lem.3.eq.1}) that
$$
\bigl(k^2-x_2x_4\bigr)^2\geq{r(k)}\mbox{.}
$$

However, according to Proposition \ref{DLP3.sec.3.prop.-10}, $r(k)$ is at least $(k^2-2k-1)^2$ and it follows that $x_2x_4$ is at most $2k+1$. In summary, we have established that
\begin{equation}\label{DLP3.sec.3.lem.3.eq.3}
\left\{
\begin{array}{l}
x_1\geq{k-2}\mbox{,}\\
x_5\geq{k-2}\mbox{,}\\
x_2x_4\leq2k+1\mbox{.}\\
\end{array}
\right.
\end{equation}

Since $x_4$ is at most $x_2$, it follows from the third inequality that $x_4$ is at most $\sqrt{2k+1}$. Now, according to (\ref{DLP3.sec.3.lem.3.eq.0}), (\ref{DLP3.sec.3.eq.7}), and Proposition \ref{DLP3.sec.3.prop.-0.5},
\begin{equation}\label{DLP3.sec.3.lem.3.eq.2}
(x_1x_6-x_3x_4)^2\geq{r(k)}\mbox{.}
\end{equation}

Observe that $x_1x_6$ must be at least $x_3x_4$. Indeed, otherwise, as $x_3$ is at most $\sqrt{2k+1}$, the left-hand side of (\ref{DLP3.sec.3.lem.3.eq.2}) would be less than $k^2(2k+1)$ and therefore less than $k^2(k-3)^2$, which would contradict Proposition \ref{DLP3.sec.3.prop.-10}. As $x_1x_6$ is at least $x_3x_4$, borrowing the argument we used to prove (\ref{DLP3.sec.3.lem.3.eq.3}) yields
\begin{equation}\label{DLP3.sec.3.lem.3.eq.4}
\left\{
\begin{array}{l}
x_6\geq{k-2}\mbox{,}\\
x_3x_4\leq2k+1\mbox{.}\\
\end{array}
\right.
\end{equation}

In the rest of the proof we consider two cases depending on whether $x_2x_6$ is at least $x_3x_5$ or not. First, if $x_2x_6$ is at least $x_3x_5$, the argument that we used to establish both (\ref{DLP3.sec.3.lem.3.eq.3}) and (\ref{DLP3.sec.3.lem.3.eq.4}) further yields
\begin{equation}\label{DLP3.sec.3.lem.3.eq.5}
\left\{
\begin{array}{l}
x_2\geq{k-2}\mbox{,}\\
x_3x_5\leq2k+1\mbox{.}\\
\end{array}
\right.
\end{equation}

As $x_2$ is at least $k-2$, it follows from (\ref{DLP3.sec.3.lem.3.eq.3}) that
\begin{equation}\label{DLP3.sec.3.lem.3.eq.6}
x_4\leq\frac{2k+1}{k-2}
\end{equation}
and as $x_5$ is at least $k-2$, it follows from (\ref{DLP3.sec.3.lem.3.eq.5}) that
\begin{equation}\label{DLP3.sec.3.lem.3.eq.7}
x_3\leq\frac{2k+1}{k-2}\mbox{.}
\end{equation}

Since $k$ is at least $9$, the right-hand side of (\ref{DLP3.sec.3.lem.3.eq.6}) and (\ref{DLP3.sec.3.lem.3.eq.7}) is at most $3$ and we obtain that $x_1$, $x_2$, $x_5$, and $x_6$ are all at least $k-2$ while $x_3$ and $x_4$ are at most $3$, as desired. Finally if $x_2x_6$ is less than $x_3x_5$, then using the same argument but where $x_2$ is exchanged with $x_3$ and $x_5$ with $x_6$ shows that $x_1$, $x_3$, $x_5$, and $x_6$ are all at least $k-2$ while $x_2$ and $x_4$ are at most $3$.
\end{proof}

In order to prove a statement similar to that of Lemma \ref{DLP3.sec.3.lem.3} but in the case when $x_3$ is negative, we will use of the following constraint satisfied by the points in $\mathcal{X}_{P,Q}(k)$ that arises from the embedding of $P$ and $Q$ into $[0,k]^3$.

\begin{prop}\label{DLP3.sec.3.prop.1}
Consider a lattice point $P$ in $[0,k]^3$ and a lattice $(3,k)$\nobreakdash-triangle $Q$ such that $P$ and $Q$ are disjoint. For every point $x$ in $\mathcal{X}_{P,Q}(k)$,
$$
|x_3-x_6|\leq{k}\mbox{.}
$$
\end{prop}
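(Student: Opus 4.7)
The proof is essentially a direct unfolding of definitions, and I do not expect any real obstacle. The plan is to recognise $x_3 - x_6$ as the difference of the third coordinates of two vertices of the (possibly symmetry-transformed) triangle $Q$, both of which must lie in the interval $[0,k]$.

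First, I would recall the construction from the paragraphs preceding the statement describing how a point $x \in \mathcal{X}_{P,Q}(k)$ is built. Starting from $P$ and $Q$, one may first apply to the pair $(P,Q)$ an isometry of $[0,k]^3$, which consists of permuting the coordinates of $\mathbb{R}^3$ and/or reflecting across hyperplanes of the form $\{y \in \mathbb{R}^3 : y_i = k/2\}$; such isometries send lattice $(3,k)$-polytopes to lattice $(3,k)$-polytopes, so after this operation the triangle $Q$ still has all three of its vertices in $[0,k]^3$. One may then relabel the vertices of $Q$ as $q^0, q^1, q^2$, form the matrix $A$ whose columns are $q^1 - q^0$ and $q^2 - q^0$, set $b = q^0 - P$, and read off the entries of $A$ and $b$ via the identifications (\ref{DLP3.sec.3.eq.4}) and (\ref{DLP3.sec.3.eq.5}).

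Second, I would inspect the third row of $A$ under this identification. By (\ref{DLP3.sec.3.eq.4}), this row is $(x_3, x_6)$, and by the definition of $A$ it is also equal to $(q^1_3 - q^0_3,\, q^2_3 - q^0_3)$. Subtracting the two entries gives
$$
x_3 - x_6 = (q^1_3 - q^0_3) - (q^2_3 - q^0_3) = q^1_3 - q^2_3.
$$
Since $q^1$ and $q^2$ are vertices of a lattice $(3,k)$-triangle, their third coordinates lie in $[0,k]$, and therefore $|q^1_3 - q^2_3| \leq k$. This yields $|x_3 - x_6| \leq k$, as announced.
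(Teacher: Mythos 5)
Your proof is correct and follows essentially the same route as the paper's: identify $x_3-x_6$ with the difference $q^1_3-q^2_3$ of third coordinates of two vertices of the (isometry-transformed) triangle, both of which lie in $[0,k]$. Nothing is missing.
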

\begin{proof}
Consider a point $x$ in $\mathcal{X}_{P,Q}(k)$. By the definition of $\mathcal{X}_{P,Q}(k)$, there exist a lattice point $\tilde{P}$ in $[0,k]^3$ and a lattice $(3,k)$-triangle $\tilde{Q}$ that coincide with $P$ and $Q$ up to some isometry of $[0,k]^3$ and satisfy
$$
\left\{
\begin{array}{l}
x_3=\tilde{q}^1_3-\tilde{q}^0_3\mbox{ and}\\
x_6=\tilde{q}^2_3-\tilde{q}^0_3\mbox{,}\\
\end{array}
\right.
$$
where $\tilde{q}^0$, $\tilde{q}^1$, and $\tilde{q}^2$ adequately label the vertices of $\tilde{Q}$. Therefore
$$
x_3-x_6=\tilde{q}^1_3-\tilde{q}^2_3
$$
and since the points $\tilde{q}^0$, $\tilde{q}^1$, and $\tilde{q}^2$ are contained in the cube $[0,k]^3$, the absolute value of $x_3-x_6$ is at most $k$, as desired.
\end{proof}

Thanks to the constraint stated by Proposition \ref{DLP3.sec.3.prop.1}, we can bound the coordinates of the points $x$ in $\mathcal{X}_{P,Q}(k)$ that satisfy the assertions (i), (ii), and (iii) in the statement of Lemma~\ref{DLP3.sec.3.lem.2} and whose third coordinate is negative.

\begin{lem}\label{DLP3.sec.3.lem.4}
Assume that $k$ is at least $9$. Consider a lattice point $P$ contained in $[0,k]^3$ and a lattice $(3,k)$-triangle $Q$ such that the distance of $P$ to the affine hull of $Q$ is $\varepsilon_0(3,k)$. Further consider a point $x$ in $\mathcal{X}_{P,Q}(k)$ such that
\begin{enumerate}
\item[(i)] $x_1$, $x_2$, and $x_4$ to $x_6$ are non-negative,
\item[(ii)] $x_1x_5$ is at least $x_2x_4$, and
\item[(iii)] $x_4$ is at most $x_2$.
\end{enumerate}

If $x_3$ is negative, then $x_1$ and $x_5$ are at least $k-2$ while $x_2$ and $x_6$ are at least $k-6$. Moreover $x_3$ is at least $-4$ and $x_4$ is at most $6$.
\end{lem}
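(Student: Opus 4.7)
The plan is to follow the template of the proof of Lemma~\ref{DLP3.sec.3.lem.3}, with Proposition~\ref{DLP3.sec.3.prop.1} supplying the key new control that compensates for the fact that $x_3$ is now negative. As in that proof, Propositions~\ref{DLP3.sec.3.prop.-0.5} and~\ref{DLP3.sec.3.prop.0}, together with the upper bound (\ref{DLP3.sec.3.eq.2}) on $\varepsilon_0(3,k)$, force each of the three summands in (\ref{DLP3.sec.3.eq.7}) to be at least $r(k)=k^4-4k^3+4k^2-2k+1$. The consequences of $(x_1x_5-x_2x_4)^2\geq r(k)$ can then be read off verbatim as in Lemma~\ref{DLP3.sec.3.lem.3}: using hypotheses (i)--(iii) and parts (i) and (ii) of Proposition~\ref{DLP3.sec.3.prop.-10}, one obtains $x_1,x_5\geq k-2$ together with $x_2x_4\leq 2k+1$, hence $x_4\leq\sqrt{2k+1}$.

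The central new step is to exploit $x_3<0$ in the inequality $(x_1x_6-x_3x_4)^2\geq r(k)$. I would rewrite $x_1x_6-x_3x_4=x_1x_6+|x_3|x_4$, which is non-negative, and bound it above using $x_1\leq k$, $x_4\leq\sqrt{2k+1}$, and (crucially) $|x_3|\leq k-x_6$ from Proposition~\ref{DLP3.sec.3.prop.1}, obtaining
\[
k\,x_6+(k-x_6)\sqrt{2k+1}\geq\sqrt{r(k)}\mbox{.}
\]
Part (iii) of Proposition~\ref{DLP3.sec.3.prop.-10} then replaces the right-hand side by $k^2-4(k-\sqrt{2k+1})$, and elementary rearrangement turns the resulting inequality into
\[
(k-\sqrt{2k+1})\,x_6\geq(k-4)(k-\sqrt{2k+1})\mbox{,}
\]
so $x_6\geq k-4$. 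Feeding this back into Proposition~\ref{DLP3.sec.3.prop.1} yields $x_3\geq x_6-k\geq -4$.

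I would close by repeating the same manoeuvre on $(x_2x_6-x_3x_5)^2\geq r(k)$: the non-negative quantity $x_2x_6+|x_3|x_5$ is bounded above by $x_2x_6+(k-x_6)k$ using $|x_3|\leq k-x_6$ and $x_5\leq k$, and below by $\sqrt{r(k)}\geq k^2-4(k-\sqrt{2k+1})$. Combined with $x_6\geq k-4$, this reduces to $(k-4)(k-x_2)\leq 4(k-\sqrt{2k+1})$, and a routine numerical check confirms that the right-hand side is at most $6(k-4)$ for every $k\geq 9$, giving $x_2\geq k-6$. The final bound $x_4\leq 6$ then follows immediately from $x_2x_4\leq 2k+1$ and $x_2\geq k-6$, since $(2k+1)/(k-6)<7$ for every $k\geq 9$ and $x_4$ is an integer. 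The main obstacle is the handling of the cross-terms $|x_3|x_4$ and $|x_3|x_5$, which cannot be dropped as in Lemma~\ref{DLP3.sec.3.lem.3}: controlling them requires Proposition~\ref{DLP3.sec.3.prop.1} to trade $|x_3|$ against $k-x_6$, and the sharper lower bound on $\sqrt{r(k)}$ in Proposition~\ref{DLP3.sec.3.prop.-10}(iii) is precisely what is needed to keep the resulting product inequality tight.
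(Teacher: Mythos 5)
Your proof is correct, and it reaches all the stated bounds by the same overall route as the paper: the same reduction forcing each of the three squared minors in (\ref{DLP3.sec.3.eq.7}) to be at least $r(k)$, the same treatment of $(x_1x_5-x_2x_4)^2\geq r(k)$ as in Lemma~\ref{DLP3.sec.3.lem.3}, and the same reliance on Proposition~\ref{DLP3.sec.3.prop.1} and on part (iii) of Proposition~\ref{DLP3.sec.3.prop.-10} to absorb the cross-terms created by the negative $x_3$. The one place where you genuinely diverge is the central step. The paper substitutes $x_6\leq k+x_3$ into the upper bound for $x_1x_6-x_3x_4$, obtains $\bigl(k^2+(k-\sqrt{2k+1})x_3\bigr)^2\geq r(k)$, and must then analyse a quadratic inequality in $(k-\sqrt{2k+1})x_3$, compute its discriminant, and rule out the branch $x_3\leq -k(2k-3)/(k-\sqrt{2k+1})$ before concluding $x_3\geq-4$ and only afterwards $x_6\geq k-6$. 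You instead substitute $|x_3|\leq k-x_6$, which yields a bound that is \emph{linear} in $x_6$; the rearrangement to $(k-\sqrt{2k+1})x_6\geq(k-4)(k-\sqrt{2k+1})$ gives $x_6\geq k-4$ directly, and $x_3\geq-4$ then falls out of Proposition~\ref{DLP3.sec.3.prop.1} with no case analysis. This reversal of order buys a cleaner argument (no discriminant, no branch to exclude) and even a marginally stronger intermediate bound $x_6\geq k-4$; your subsequent derivations of $x_2\geq k-6$ and $x_4\leq 6$, as well as the numerical check $4(k-\sqrt{2k+1})\leq 6(k-4)$ for $k\geq 9$, are all sound.
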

\begin{proof}
The proof begins exactly like that of Lemma \ref{DLP3.sec.3.lem.3}. In particular, we get
\begin{equation}\label{DLP3.sec.3.lem.4.eq.0}
g(x)\geq3k^4-4k^3+4k^2-2k+1
\end{equation}
and
$$
\left\{
\begin{array}{l}
x_1\geq{k-2}\mbox{,}\\
x_5\geq{k-2}\mbox{,}\\
x_2x_4\leq2k+1\mbox{.}\\
\end{array}
\right.
$$

As $x_4$ is at most $x_2$, it follows that $x_4$ is at most $\sqrt{2k+1}$. Now, combining (\ref{DLP3.sec.3.eq.7}), (\ref{DLP3.sec.3.lem.4.eq.0}), and Proposition \ref{DLP3.sec.3.prop.-0.5}, we obtain
\begin{equation}\label{DLP3.sec.3.lem.4.eq.3.5}
(x_1x_6-x_3x_4)^2\geq{r(k)}
\end{equation}

Under the assumption that $x_3$ is negative, since $x_1$ is at most $k$, $x_4$ at most $\sqrt{2k+1}$, and, by Proposition \ref{DLP3.sec.3.prop.1}, $x_6$ at most $k+x_3$, this implies that
$$
\bigl(k^2+\bigl(k-\sqrt{2k+1}\bigr)x_3\bigr)^2\geq{r(k)}\mbox{.}
$$

Developing the left-hand side and expressing $r(k)$ in terms of $k$ yields
$$
\bigl(k-\sqrt{2k+1}\bigr)^2x_3^2+2k^2\bigl(k-\sqrt{2k+1}\bigr)x_3+4k^3-4k^2+2k-1\geq0\mbox{.}
$$

Treating this as a quadratic inequality in $(k-\sqrt{2k+1})x_3$, the discriminant is $4r(k)$ which is positive because $k$ is at least $9$. Hence, either
\begin{equation}\label{DLP3.sec.3.lem.4.eq.4}
\bigl(k-\sqrt{2k+1}\bigr)x_3\leq-k^2-\sqrt{r(k)}
\end{equation}
or
\begin{equation}\label{DLP3.sec.3.lem.4.eq.5}
\bigl(k-\sqrt{2k+1}\bigr)x_3\geq-k^2+\sqrt{r(k)}\mbox{.}
\end{equation}

However, by Proposition \ref{DLP3.sec.3.prop.-10}, $r(k)$ is at least $k^2(k-3)^2$ and (\ref{DLP3.sec.3.lem.4.eq.4}) implies
$$
x_3\leq-k\frac{2k-3}{k-\sqrt{2k+1}}\mbox{.}
$$

As the right-hand side of this inequality is less than $-k$ and $x_3$ is at least $-k$, this inequality cannot hold and this proves that (\ref{DLP3.sec.3.lem.4.eq.5}) does. Hence,
\begin{equation}\label{DLP3.sec.3.lem.4.eq.6}
x_3\geq\frac{\sqrt{r(k)}-k^2}{k-\sqrt{2k+1}}\mbox{.}
\end{equation}

By Proposition \ref{DLP3.sec.3.prop.-10}, the numerator of the right-hand side is greater than minus four times its denominator and we get that $x_3$ is at least $-4$.

Let us now bound $x_6$. As $x_1$ and $x_4$ are both at most $k$, (\ref{DLP3.sec.3.lem.4.eq.3.5}) implies that $r(k)$ is at most $k^2(x_6-x_3)^2$ and therefore at most $k^2(x_6+4)^2$. This proves that $x_6$ is at least $k-6$ as by Proposition \ref{DLP3.sec.3.prop.-10}, $r(k)$ is greater than $k^2(k-3)^2$.

We combine (\ref{DLP3.sec.3.eq.7}), (\ref{DLP3.sec.3.lem.4.eq.0}), and Proposition \ref{DLP3.sec.3.prop.-0.5} again in order to obtain
$$
(x_2x_6-x_3x_5)^2\geq{r(k)}
$$
which, as $x_5$ and $x_6$ are at most $k$ and $x_3$ is at least $-4$ implies
$$
x_2\geq\frac{\sqrt{r(k)}}{k}-4
$$

By Proposition \ref{DLP3.sec.3.prop.-10}, $r(k)$ is greater than $k^2(k-3)^2$ and therefore $x_2$ is at least $k-6$. Finally, as by (\ref{DLP3.sec.3.lem.3.eq.3}), $x_2x_4$ is at most $2k+1$,
$$
x_4\leq\frac{2k+1}{k-6}\mbox{.}
$$

Since $k$ is at least $9$, the right-hand side of this inequality is less than $7$. Therefore $x_4$ is not greater than $6$, as desired.
\end{proof}

Combining Lemmas \ref{DLP3.sec.3.lem.2}, \ref{DLP3.sec.3.lem.3} and \ref{DLP3.sec.3.lem.4}, we now prove Theorem \ref{DLP3.sec.3.thm.1}.

\begin{proof}[Proof of Theorem \ref{DLP3.sec.3.thm.1}]
Assume that $k$ is at least $9$ and that the distance between $P$ and the affine hull of $Q$ is $\varepsilon_0^u(3,k)$. By Lemma \ref{DLP3.sec.3.lem.2}, there exists a point $x$ in $\mathcal{X}_{P,Q}(k)$ whose first six coordinates are non-negative, except maybe the third, while $x_1x_5$ is at least $x_2x_4$ and $x_2$ is at least $x_4$. We consider two cases depending on the sign of $x_3$ corresponding to either Lemma \ref{DLP3.sec.3.lem.3} or Lemma \ref{DLP3.sec.3.lem.4}.

First, if $x_3$ is non-negative then, it follows from Lemma \ref{DLP3.sec.3.lem.3} that $x_4$ is at most $3$ while $x_1$, $x_5$, and $x_6$ are at least $k-2$. Moreover, either $x_2$ is at least $k-2$ and $x_3$ is at most $3$ or $x_3$ is at least $k-2$ and $x_2$ is at most $3$. Observe that exchanging $x_i$ and $x_{i+1}$ when $i$ is equal to $2$,$5$, and $8$ results in a lattice point whose coordinates satisfy the same bounds. Moreover, the resulting point still belongs to $\mathcal{X}_{P,Q}(k)$ because these three exchanges amount to transform $P$ and $Q$ by permuting the second and third coordinates of $\mathbb{R}^3$. Hence, we can assume without loss of generality that $x$ in a point from $\mathcal{X}_{P,Q}(k)$ such that $x_1$, $x_2$, $x_5$, and $x_6$ are at least $k-2$ while $x_3$ and $x_4$ are both at most $3$.

If however $x_3$ is negative. It then follows from Lemma \ref{DLP3.sec.3.lem.4} that $x_1$, $x_2$, $x_5$, and $x_6$ are at least $k-6$ while $x_3$ is at least $-4$ and $x_4$ is at most $6$.

In particular, in both cases, the lattice point $a$ from $\mathbb{R}^6$ such that $a_i$ is equal to $k-x_i$ when $i$ is equal to $1$, $2$, $5$, or $6$ and to $x_i$ when $i$ is equal to $3$ or $4$ belongs to $[0,6]^2\mathord{\times}[-4,6]\mathord{\times}[0,6]^3$ and by construction, $\phi_k(a)$ is precisely the orthogonal projection of $x$ on $\mathbb{R}^6$. In order to prove that $a$ belongs to $\mathcal{A}$, it therefore suffices to show that $a_6$ is at least $-a_3$. According to Proposition \ref{DLP3.sec.3.prop.1}, $x_6-x_3$ is at most $k$ which in terms of the coordinates of $a$ implies that $a_6$ is at least $-a_3$. As a consequence, $a$ belongs to $\mathcal{A}$, as desired.
\end{proof}

\bibliography{Bessatsu}
\bibliographystyle{ijmart}

\end{document}